\newtheorem{theorem}{Theorem}[section]
\newtheorem{remark}{Remark}[section]
\newtheorem{definition}{Definition}[section]
\newtheorem{proposition}{Proposition}[section]
\begin{document}
%------------------------------------------------------------------------------

\title{A Vanka-based  parameter-robust multigrid relaxation for the Stokes-Darcy Brinkman problems}

\author{Yunhui He\thanks{Department of Computer Science, The University of British Columbia, Vancouver, BC, V6T 1Z4, Canada,   \tt{yunhui.he@ubc.ca}.}  }

\maketitle

 \begin{abstract}
We propose  a block-structured multigrid relaxation scheme for solving the Stokes-Darcy Brinkman equations discretized by the marker and cell scheme.   An element-based additive Vanka smoother is used to solve the corresponding  shifted Laplacian operator. Using local Fourier analysis, we present the stencil for the additive Vanka smoother and derive an optimal smoothing factor for Vanka-based Braess-Sarazin relaxation  for the Stokes-Darcy Brinkman equations.  Although the optimal damping parameter is dependent on meshsize and physical parameter,  it is very close to one.  Numerical results   of two-grid and V(1,1)-cycle are presented, which show  high efficiency of the proposed relaxation scheme and its robustness  to physical parameters and the meshsize.  Using a damping parameter equal to one gives almost the same results as these for the optimal damping parameter at a lower computational overhead.  
\end{abstract}

\vskip0.3cm {\bf Keywords.}
 
 Local Fourier analysis,  multigrid, Stokes-Darcy Brinkman equations, Braess-Sarazin relaxation, Vanka smoother
%
%
%===================================================================================================================================
\section{Introduction}
 
The numerical solution of fluid flow problems  is an important  topic in computational science and engineering, which has received much attention in the last few decades \cite{bathe2007benchmark,brandt1979multigrid,brandt2011multigrid,connor2013finite,vanka1986block,wesseling2001geometric}.   Stokes-Darcy Brinkman problem is one of them used to model fluid motion in porous media with fractures. The discretization of the fluid flow problems often leads to a saddle-point system, which is ill-conditioned.  Designing fast numerical solution of these problems is often challenging due to the small magnitude of physical parameters of the model.

We consider the multigrid numerical solution of the  Stokes-Darcy Brinkman equations
\begin{subequations}
\label{eq:SDB}
\begin{align}
  - \epsilon^2 \Delta\boldsymbol{u} +\boldsymbol{u} +  \nabla  p =&\boldsymbol{f} \qquad \text{in}\,\, \Omega \label{eq:stokes-DB-one}\\
  \nabla  \cdot \boldsymbol{u}=&g \qquad \text{in}\,\,  \Omega \label{eq:stokes-DB-two} \\
   \boldsymbol{u} =&0  \qquad \text{on}\,\,  \partial \Omega \label{eq:stokes-DB-three},  
\end{align}
\end{subequations}
where $\epsilon \in(0,1]$.    The source term $g$  is assumed to satisfy the solvability condition
\begin{equation*}
 \int_{\Omega} g\, d\Omega =0.
\end{equation*} 
Then, equations \eqref{eq:stokes-DB-one}, \eqref{eq:stokes-DB-two}, and \eqref{eq:stokes-DB-three} have a unique solution.  

A variety of discretization schemes are available for equations \eqref{eq:stokes-DB-one}, \eqref{eq:stokes-DB-two}, and \eqref{eq:stokes-DB-three}, including finite element methods \cite{gulbransen2010multiscale,vassilevski2014mixed,xie2008uniformly,xu2010new,zhai2016new,zhang2009low,zhao2020new}, finite difference techniques \cite{he2017finite,sun2019stability}, and divergence-conforming B-spline methods \cite{evans2013isogeometric}.  When $\epsilon= 0$, the model problem is reduced to the Darcy problem \cite{arraras2021multigrid,harder2013family}.  For $\epsilon \in(0,1]$, designing a robust discretization and numerical solver is  challenging.  The convergence rate deteriorates as the Stokes-Darcy Brinkman becomes Darcy-dominating when certain stable Stokes elements are used \cite{hannukainen2011computations}, for example, Taylor–Hood element.  While, as the Stokes-Darcy Brinkman problem becomes Stokes-dominating when Darcy stable elements such as the lowest order Raviart-Thomas elements are used,  degradation on convergence is observed \cite{mardal2002robust}. 

Upon discretization, large-scale indefinite linear systems typically need to be solved, at times repeatedly.  For saddle-point systems,
within the context of multigrid, there are several effective block-structured relaxation  schemes for solving such linear systems, such as Braess-Sarazin  smoother \cite{braess1997efficient, he2018local,MR1810326},   distributive smoother \cite{chen2015multigrid,he2018local},  Schwarz-type smoothers \cite{schoberl2003schwarz}, Vanka smoother \cite{claus2021nonoverlapping,MR3217219,manservisi2006numerical,molenaar1991two,wobker2009numerical},  and Uzawa-type relaxation \cite{MR1451114,MR3217219,MR1302679,luo2017uzawa}.

We note also that a number of effective preconditioning methods are available for the Stokes-Darcy Brinkmans problems, for example the scalable block diagonal preconditioner \cite{vassilevski2013block}, and Uzawa-type preconditioning \cite{kobelkov2000effective,sarin1998efficient}.  
Multigrid methods are studied in depth \cite{butt2018multigrid,coley2018geometric,kanschat2017geometric,larin2008comparative,olshanskii2012multigrid}.   Braess-Sarazin, Uzawa, and Vanka smoothers within multigrid with finite element discretization have been discussed \cite{larin2008comparative}. However, the convergence rate is highly dependent on physical parameters.  A Gauss–Seidel smoother based on a Uzawa-type iteration is studied \cite{MR3217219}, where the authors provide an upper bound on the smoothing factor.   Moreover,   the performance of Uzawa with  a Gauss–Seidel type coupled Vanka smoother \cite{vanka1986block} has been investigated  \cite{MR3217219}, in which the  pressure and the velocities in a grid cell, are updated simultaneously, showing that the actual convergence of the W-cycle of Uzawa is approximately the same as that obtained by the Vanka smoother.

Our interest is in  the marker and cell scheme (MAC), a finite difference method on a staggered mesh. On a uniform mesh discretization, the method is   second-order accurate for both velocity and pressure \cite{sun2019stability}. We propose a Vanka-type  Braess-Sarazin relaxation (V-BSR) scheme for the Stokes-Darcy Brinkman equations discretized by the MAC scheme on  staggered meshes.  In contrast to the Vanka smoother \cite{MR3217219}, our work builds an algorithm that decouples velocity and pressure, which is often preferred considering the cost efficiency.  Specifically, in our relaxation scheme, the shifted Laplacian  operator, $ - \epsilon^2 \Delta\boldsymbol{u} +\boldsymbol{u}$,  is solved by an additive Vanka-type smoother.  Instead of solving many subproblems involved in Vanka setting, we derive the stencil of the Vanka smoother, which means that we can form the global matrix of the Vanka smoother. As a result,  in our multigrid method we only have matrix-vector products.  This represents significant savings compared to traditional methods that require computationally expensive exact solves; in V-BSR, we solve the Schur complement system by only two or three iterations of the Jacobi method, which achieves the same performance as that of exact solve.  We apply local Fourier analysis (LFA) to select the multigrid damping parameter and predict the actual multigrid performance.  From this analysis, we derive an optimal damping parameter and optimal smoothing factors. Those parameters are dependent on physical parameters and the meshsize, which means that we can propose adaptive damping parameter in each multigrid level. The optimal parameter turns out to be close to one and relatively insensitive to physical parameters and meshsize. This allows for an easy choice of an approximately optimal damping parameter. We  quantitatively compare the results with optimal parameter and the value of one from LFA and present numerical results of two-grid and V-cycle multigrid to validate the high efficiency of our methods. Our V-cycle results outperform these of Uzawa and Vanka smoothers \cite{MR3217219}, especially for small $\epsilon$.

The rest of the work is organized as follows.  In Section \ref{sec:Discretization-relaxation} we review the MAC scheme for our model problem and propose the afore-mentioned Vanka-based Braess-Sarazin relaxation.  We apply LFA to study the smoothing process in Section \ref{sec:LFA}, where optimal LFA smoothing factor is derived.  In Section \ref{sec:numerical} we present our LFA predictions for the two-grid method and actual multigrid performance.  Finally, we draw conclusions in Section \ref{sec:con}.

\section{Discretization and relaxation}\label{sec:Discretization-relaxation}

As mentioned in the Introduction, we  use  throughout the well-known MAC  scheme to solve~\eqref{eq:SDB}. For  the discretization  of \eqref{eq:SDB},   a staggered mesh is needed to guarantee numerical stability.  The discrete unknowns $u,v,p,$ are placed in different locations; see Figure \ref{fig:MAC-stokes-brinkman}.  The stability and convergence of the MAC scheme for this problem has been studied \cite{sun2019stability}.

\begin{figure}[H]
\centering
\includegraphics[width=0.5\textwidth]{./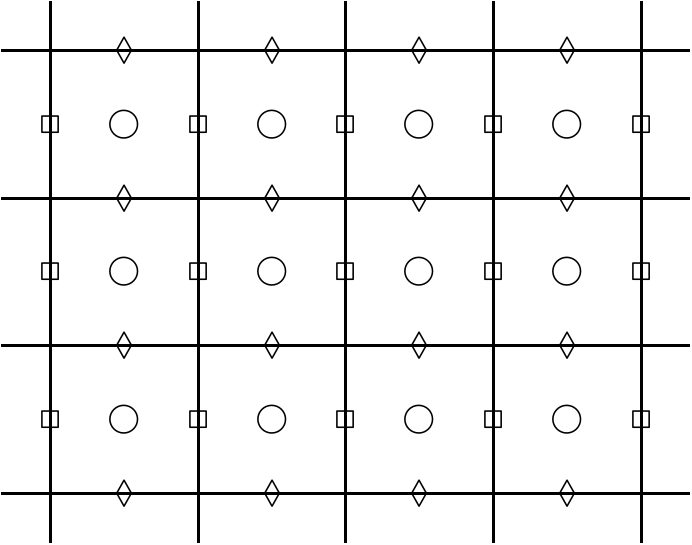}
 \caption{The  location of the unknowns in the staggered grid: $\Box-u,\,\, \lozenge-v,  \,\, \bigcirc-p$.}\label{fig:MAC-stokes-brinkman}
\end{figure}

The stencil representation of MAC for the Stokes-Darcy Brinkmann  equations is
 \begin{equation}\label{eq:Kh--stencil-operator}
   \mathcal{K}_h   =\begin{pmatrix}
      - \epsilon^2\Delta_{h}  + I                         & 0                                          &   (\partial_{x})_{h/2}\\
      0                                          & - \epsilon^2 \Delta_{h}   + I              &    (\partial_{y})_{h/2} \\
      - (\partial_{x})_{h/2}        &   - (\partial_{y})_{h/2}             & 0
    \end{pmatrix},
  \end{equation}
where  
\begin{equation*}
   -\Delta_{h} =\frac{1}{h^2}\begin{bmatrix}
       & -1 &  \\
      -1 & 4 & -1 \\
        & -1 &
    \end{bmatrix},\quad
     (\partial_{x})_{h/2} =\frac{1}{h}\begin{bmatrix}
       -1& 0 & 1 \\
    \end{bmatrix},\quad
     (\partial_{y})_{h/2} =\frac{1}{h}\begin{bmatrix}
       1 \\
      0 \\
      -1
    \end{bmatrix}.
\end{equation*}

After discretization, the corresponding linear system is 
\begin{equation}\label{eq:linear-system}
\mathcal{K}_h \boldsymbol{x}_h=\begin{pmatrix}
\mathcal{A} & \mathcal{B}^T\\
\mathcal{B} & 0
\end{pmatrix}
\begin{pmatrix}
\boldsymbol{u}_h
\\
p_h
\end{pmatrix}
=\begin{pmatrix}
\boldsymbol{f}_h
\\
g_h
\end{pmatrix}=b_h,
\end{equation} 
where $\mathcal{A}$ is the matrix corresponding to the discretization of $ - \epsilon^2 \Delta\boldsymbol{u} +\boldsymbol{u}$, and $\mathcal{B}^T$ is the discrete gradient.

In order to solve \eqref{eq:linear-system} efficiently by multigrid we use BSR, with the smoother
 \begin{equation}\label{eq:Mh-form}
\mathcal{M}_h= \begin{pmatrix}
\mathcal{C} & \mathcal{B}^T\\
\mathcal{B} & 0
\end{pmatrix},
\end{equation}
where $\mathcal{C}$ is an approximation to $\mathcal{A}$. In the context of preconditioning, such an approach is known as {\em constraint preconditioning} \cite{keller2000constraint,chidyagwai2016constraint,rees2007preconditioner}  and it has received quite a bit of attention due to its attactive property of computing interim approximate solutions that satisfy the constraints.

A number of studies  \cite{he2018local,YH2021massStokes} have shown that the efficiency of solving the Laplacian will determine the convergence of BSR.  To construct an efficient approximation $\mathcal{C}$, we first investigate the discrete operator $-\epsilon^2 \Delta u +u $ denoted by 
 \begin{equation}\label{eq:positive-shift-Laplace-system}
L=  A+ I,
\end{equation} 
where  $A$ corresponds to the five-point discretization of  operator  $-\epsilon^2 \Delta u$. The stencil notation for the discrete operator $-\epsilon^2 \Delta u +u $ is 

\begin{equation}\label{eq:shift-Laplace-stencil}
L= \frac{\epsilon^2}{h^2}
\begin{bmatrix}
& -1 & \\
-1 & 4+\frac{h^2}{\epsilon^2} & -1 \\
& -1 & 
\end{bmatrix}=\frac{\epsilon^2}{h^2}
\begin{bmatrix}
& -1 & \\
-1 & 4+r & -1 \\
& -1 & 
\end{bmatrix},
\end{equation}
where $r=\frac{h^2}{\epsilon^2}$. When $r=0$,  \eqref{eq:shift-Laplace-stencil} reduces to the discretization of  $-\epsilon^2 \Delta u$.
 
Recently, we proposed an additive element-wise Vanka smoother \cite{CH2021addVanka} for $\Delta u$. Our current goal is to extend our approach to \eqref{eq:positive-shift-Laplace-system}. An immediate challenge here contrary to \cite{CH2021addVanka} is the difference in scale between the discretized scaled Laplacian and the identity operator.

Denote the element-wise smoother as $M_e$, which has the form
 \begin{equation}\label{eq:Vanka-operator}
M_e = \sum_{j=1}^{N} V_j^T D_j L_j^{-1} V_j,
\end{equation} 
where $D_j=\frac{1}{4}I$ with $I$ be the $4\times 4$-identity matrix,  $L_j$ is the coefficient matrix of $j$-th subproblem defined for one element, and $V_j$ is a restriction operator mapping the global vector to the $j$-th subproblem.
We consider
\begin{equation*}
\mathcal{C}^{-1} = 
\begin{pmatrix}
M_e & 0\\
0     & M_e
\end{pmatrix}.
\end{equation*}
The relaxation scheme  for \eqref{eq:linear-system}  is 
\begin{equation}\label{eq:BSR-relax-scheme}
\boldsymbol{x}^{k+1}_h = \boldsymbol{x}^k_h+\omega \mathcal{M}^{-1}_h(b_h- \mathcal{K}_h\boldsymbol{x}^k_h).
\end{equation} 
We refer to the above relaxation as {\em Vanka-based Braess-Sarazin relaxation} (V-BSR). 

Let $b_h- \mathcal{K}_h\boldsymbol{x}^k_h=( r_{ \boldsymbol{u}}, r_p)$.  In \eqref{eq:BSR-relax-scheme}, we need to solve for
 $(\delta \boldsymbol{u}, \delta p)=\mathcal{M}^{-1}_h(r_{\boldsymbol{u}}, r_p)$ by
\begin{eqnarray}
  (\mathcal{B}\mathcal{C}^{-1}\mathcal{B}^{T})\delta p&=&\mathcal{B}\mathcal{C}^{-1} r_{ \boldsymbol{u}}-  r_{p}, \label{eq:solution-schur-complement}\\
  \delta \boldsymbol{u}&=&  \mathcal{C}^{-1}(r_{ \boldsymbol{u}}-\mathcal{B}^{T}\delta p).\nonumber
\end{eqnarray}

Solving \eqref{eq:solution-schur-complement} exactly is prohibitive and impractical in the current context, and it has been shown in a few studies \cite{he2018local,MR1810326}  that an inexact solve  can be applied and perform well. In the sequel we will present a smoothing analysis for the exact solve, but in practice, for assessing the performance of the multigrid scheme we apply a few iterations of weighted Jacobi to \eqref{eq:solution-schur-complement}. 

The relaxation error operator for \eqref{eq:BSR-relax-scheme} is given by
\begin{equation}\label{eq:relaxation-error-operator}
\mathcal{S}_h = I-\omega \mathcal{M}^{-1}_h\mathcal{K}_h,
\end{equation}
where $\omega$ is a damping parameter to be determined. 

For a two-grid method, the error propagation operator  is 
\begin{equation}\label{eq:two-grid-error-operator}
E_h=S^{\nu_2}_h (I- P_h(L_{2h})^{-1} R_h L_h) S^{\nu_1}_h,
\end{equation}
 where $L_{2h}$ is rediscretization for the coarse-grid operator and the integers $\nu_1$ and $\nu_2$ are the numbers of pre-  and post-smoothing steps.   For simplicity, we denote the overall number of those steps by $\nu=\nu_1+\nu_2$.  We consider  simple restriction operators using  six points  for the $u$ and $v$ components of the velocity,  that is, 
 \begin{equation*} 
    R_{h,u}   =    \frac{1}{8}\begin{bmatrix}
   1 &       & 1  \\
   2 &   \star   & 2   \\
  1 &      & 1  
   \end{bmatrix},
 \quad 
   R_{h,v}   =    \frac{1}{8}\begin{bmatrix}
   1 &    2  & 1  \\
    &   \star   &    \\
  1 &   2   & 1  
   \end{bmatrix},
\end{equation*}
where the $\star$ denotes the position (on the coarse grid) at which the discrete operator is applied.  For interpolation, we take  $P_{h,u} =4 R^T_{h,u}$ and     $P_{h,v} =4 R^T_{h,v}$.
For the  restriction for the pressure, we use 
 \begin{equation*} 
   R_{h,p}   =    \frac{1}{4}\begin{bmatrix}
   1 &      & 1  \\
    &   \star   &    \\
  1 &    & 1  
   \end{bmatrix},
\end{equation*}
and $P_{h,p}=4R^T_{h,p}$.  Consequently,
\begin{equation*}
R_h =  \begin{pmatrix}
   R_{h,u} & 0     & 0  \\
  0  &   R_{h,v}  & 0   \\
  0 &  0  & R_{h,p} 
   \end{pmatrix}, \quad P_h = 4 R^T_h. 
\end{equation*}  

\section{Local Fourier analysis}\label{sec:LFA}
 
 To identify a proper parameter $\omega$  in \eqref{eq:BSR-relax-scheme} to construct fast multigrid methods, we apply LFA \cite{wienands2004practical,trottenberg2000multigrid}  to examine the multigrid relaxation scheme. 
The LFA smooting factor, see Definition \ref{def:LFA-mu},  often offers a sharp prediction of actual multigrid performance.  
 
\begin{definition}
Let $L_h =[s_{\boldsymbol{\kappa}}]_{h}$  be a scalar stencil operator acting on grid ${G}_{h}$ as
\begin{equation*}
  L_{h}w_{h}(\boldsymbol{x})=\sum_{\boldsymbol{\kappa}\in{V}}s_{\boldsymbol{\kappa}}w_{h}(\boldsymbol{x}+\boldsymbol{\kappa}h),
\end{equation*}
where  $s_{\boldsymbol{\kappa}}\in \mathbb{R}$ is constant,   $w_{h}(\boldsymbol{x}) \in l^{2} ({G}_{h})$, and  ${V}$ is a finite index set. 
Then, the  symbol of $L_{h}$ is defined as:
\begin{equation}\label{eq:symbol-calculation-form}
 \widetilde{L}_{h}(\boldsymbol{\theta})=\displaystyle\sum_{\boldsymbol{\kappa}\in{V}}s_{\boldsymbol{\kappa}}e^{i \boldsymbol{\theta}\cdot\boldsymbol{\kappa}},\qquad i^2=-1. 
\end{equation} 
\end{definition}

We consider standard coarsening. The low and high frequencies are given by  $$\boldsymbol{\theta} \in  T^{\rm{L}} =\left[-\frac{\pi}{2}, \frac{\pi}{2}\right)^d;  \qquad \boldsymbol{\theta} \in  T^{\rm{H}} =\left[-\frac{\pi}{2}, \frac{3\pi}{2}\right)^d \setminus T^{\rm{L}}.$$

\begin{definition}\label{def:LFA-mu}
We define the LFA smoothing factor for relaxation error operator $\mathcal{S}_h$ as

\begin{equation*}
\mu_{\rm loc}(\mathcal{S}_h) = \max_{\boldsymbol{\theta} \in T^{\rm{H}}}\{\rho(\widetilde{\mathcal{S}}_h(\boldsymbol{\theta}))\},
\end{equation*}
where $\rho(\widetilde{S}_h(\boldsymbol{\theta}))$ stands for the spectral radius of $\widetilde{\mathcal{S}}_h(\boldsymbol{\theta})$.
\end{definition}

The symbol of  $\mathcal{S}_h$ defined in \eqref{eq:relaxation-error-operator} is a $3\times 3$ matrix since  $\mathcal{K}_h$ is a $3\times 3$ block system; see \eqref{eq:Kh--stencil-operator}. The same holds for  $\mathcal{M}_h$, see \eqref{eq:Mh-form}, and the symbol of each block is a scalar.  For more details on how to compute the symbol of 
$\mathcal{S}_h$, refer to other studies \cite{farrell2021local,he2018local}.  Since  $\mu_{\rm loc}(\mathcal{S}_h)$ is a function of the parameter $\omega$, we are interested in minimizing $\mu_{\rm loc}(\mathcal{S}_h)$ over $\omega$ to obtain a fast convergence speed. We define the optimal smoothing factor as 
 \begin{equation*}
\mu_{\rm opt} =\min_{\omega}\mu_{\rm loc}(\mathcal{S}_h).
\end{equation*}

For the two-grid error operator $E_h$ defined in \eqref{eq:two-grid-error-operator},   the two-grid LFA convergence factor is 
\begin{equation}\label{eq:LFA-two-grid-convergence-factor}
\rho_h(\nu)= \max_{\boldsymbol{\theta}\in T^{\rm L}}\{\rho( \widetilde{\mathbf{E}}_h(\omega,\boldsymbol{\theta}))\},
\end{equation} 
where $\widetilde{\mathbf{E}}_h$ is the two-grid error operator symbol and $\rho( \widetilde{\mathbf{E}}_h)$ stands for the spectral radius of matrix $\widetilde{\mathbf{E}}_h$. Since $E_h$ contains the coarse and fine grid operators, its symbol is a $12\times 12$ matrix, including four harmonic frequencies.  

From this point onward, let us drop the subscript $h$, unless it is necessary.

The  element-wise  Vanka-type  smoother has been successfully applied  to complex-shifted Laplacian systems arising from optimal control problem \cite{HL2022shiftLaplacianPiTMG}.  Here, we consider  an element-wise additive Vanka smoother applied to \eqref{eq:shift-Laplace-stencil}. The subproblem coefficient matrix  $L_j$ in \eqref{eq:Vanka-operator} has a symmetric structure
 \begin{equation*}
L_j =\frac{\epsilon^2}{h^2}
\begin{pmatrix}
4+r &  -1 &  -1 &0 \\ 
-1 &  4+r&  0 &-1\\
-1 & 0 & 4+r &-1\\
0 & -1 & -1 &4+r 
\end{pmatrix}.
\end{equation*}

It follows that
\begin{equation} \label{eq:inverse-Li-form}
L^{-1}_j =\frac{h^2}{\epsilon^2}
\begin{pmatrix}
a & b & b &c \\
b & a& c &b\\
b & c & a &b \\
c & b & b &a 
\end{pmatrix},
\end{equation}
where 
\begin{subequations}
\label{eq:Vanka}
\begin{eqnarray}
a&=& \frac{r^2+8r+14}{(2+r)(4+r)(6+r)}, \label{eq:Vanka-a}\\
b&=& \frac{1}{(2+r)(6+r)}, \label{eq:Vanka-b}\\
c&= & \frac{2}{(2+r)(4+r)(6+r)}.\label{eq:Vanka-c}
\end{eqnarray}
\end{subequations} 
It is easy to show that $a>b>c$, which is useful for our analysis. 

Based on \eqref{eq:Vanka-operator} and \eqref{eq:inverse-Li-form},  the stencil of the element-wise Vanka smoother $M_e$ is given by
\begin{equation*} 
M_e = \frac{h^2}{4\epsilon^2} 
\begin{bmatrix}
c & 2b  &c\\
2b & 4a  &2b\\
c & 2b  &c
\end{bmatrix}.
\end{equation*} 
Using \eqref{eq:symbol-calculation-form}, we have  
\begin{eqnarray*}
\widetilde{L} &=&\frac{\epsilon^2}{h^2}(4+r-2\cos \theta_1 -2\cos \theta_2),\\
 \widetilde{M}_e &=&\frac{h^2}{\epsilon^2} (a+b\cos \theta_1 +b\cos \theta_2 +c\cos\theta_1\cos \theta_2).
\end{eqnarray*}
Let $t= \epsilon^2 (4+r-2\cos \theta_1 -2\cos \theta_2)$ and $\hat{t}=\frac{\epsilon^2}{ a+b\cos \theta_1 +b\cos \theta_2 +c\cos\theta_1\cos \theta_2)}$. Then,
\begin{equation*} 
\widetilde{\mathcal{K}}= \frac{1}{h^2}\begin{pmatrix}
t  & 0         & i 2h \sin(\theta_1/2)\\
0 &  t        & i 2h \sin(\theta_2/2)\\
-i 2h \sin(\theta_1/2)       & -i 2h \sin(\theta_2/2)    &0 
\end{pmatrix},
\end{equation*}
 and
\begin{equation*} 
\widetilde{\mathcal{M}}= \frac{1}{h^2} \begin{pmatrix}
\hat{t}  & 0         &i 2h \sin(\theta_1/2)\\
0 & \hat{t}      & i 2h \sin(\theta_2/2)\\
-i 2h \sin(\theta_1/2)       & -i 2h \sin(\theta_2/2)    &0 
\end{pmatrix},
\end{equation*}
To identify the eigenvalues of $\widetilde{\mathcal{M}}^{-1} \widetilde{\mathcal{K}}$, we first compute the determinant of $\widetilde{\mathcal{K}}-\lambda \widetilde{\mathcal{M}}$:
 \begin{eqnarray*}
 | \widetilde{\mathcal{K}}-\lambda\widetilde{\mathcal{M}}| &= &  
 \frac{1}{h^2}\begin{vmatrix}
       t-\lambda \hat{t}      & 0                      & (1-\lambda) i 2h \sin(\theta_1/2) \\
      0                     &   t-\lambda \hat{t}         & (1-\lambda) i 2h \sin(\theta_2/2) \\
 -(1-\lambda) i 2h \sin(\theta_1/2)   &(1-\lambda) i 2h \sin(\theta_2/2)  &0
    \end{vmatrix}  \\
    &=&   \frac{1}{h^2}(t-\lambda \hat{t})(1-\lambda)^2\left((i 2h \sin(\theta_1/2))^2  + (i 2h \sin(\theta_2/2) )^2\right)\\
    &=& 4\hat{t}\left((\sin(\theta_1/2))^2  + (  \sin(\theta_2/2) )^2\right)(1-\lambda)^2 (\lambda -\frac{t}{\hat{t}}).
 \end{eqnarray*}
The three eigenvalues of $\widetilde{\mathcal{M}}^{-1} \widetilde{\mathcal{K}}$  are $1, 1$ and  $\frac{t}{\hat{t}}=:\lambda^*$, where
\begin{equation}\label{eq:lambda-form-r}
\lambda^*(r;\cos \theta_1,\cos \theta_2) =(a+b\cos \theta_1 +b\cos \theta_2 +c\cos\theta_1\cos \theta_2)(4+r-2\cos \theta_1 -2\cos \theta_2).
\end{equation}

 For $\boldsymbol{\theta} \in T^{\rm H}$,  it is easy to show that 
 \begin{equation}\label{eq:cos-range}
 (\cos\theta_1,\cos\theta_2) \in \mathcal{D}=[-1,1] \times [-1,0] \bigcup [-1,0]\times [0, 1].
\end{equation}

 Next, we explore the range of $\lambda^*$ over $\boldsymbol{\theta}$ for high frequencies.

\begin{theorem}\label{thm:maxmin-and-theta}
For $\boldsymbol{\theta} \in T^{\rm H}$, 
\begin{eqnarray*}
\max_{\boldsymbol{\theta} }\lambda^*(r;\cos \theta_1,\cos \theta_2) &=& \lambda^*(r;-1,-1) =(a-2b+c)(8+r)=:d_1(r),\\
\min_{\boldsymbol{\theta} }\lambda^*(r;\cos \theta_1,\cos \theta_2) &=& \lambda^*(r;1,0) =(a+b)(2+r)=:d_2(r).
\end{eqnarray*}
\end{theorem}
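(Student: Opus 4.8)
The plan is to regard $\lambda^*$ as a function of $x:=\cos\theta_1$ and $y:=\cos\theta_2$ on the region $\mathcal{D}$ of \eqref{eq:cos-range}, and to exploit the product structure of \eqref{eq:lambda-form-r}. First I would clear denominators: writing \eqref{eq:Vanka} over the common denominator $N=(2+r)(4+r)(6+r)$ gives $a=(r^2+8r+14)/N$, $b=(r+4)/N$, $c=2/N$, from which $a>b>c>0$ is immediate and the two target quantities $(a-2b+c)(8+r)$ and $(a+b)(2+r)$ are obtained by direct substitution. Since $\lambda^*$ is symmetric in $x,y$ and, as one checks from \eqref{eq:cos-range}, $\mathcal{D}$ is invariant under $x\leftrightarrow y$, the symmetry can be used to halve the casework.

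For the minimum I would use separate (one–variable) concavity. Writing $\lambda^*$ as a product of two factors that are each affine in $x$, the coefficient of $x^2$ is $-2(b+cy)$, which is negative on $\mathcal{D}$ because $b>c$; by symmetry $\lambda^*$ is also concave in $y$ for each fixed $x$. A function concave in each variable separately attains its minimum over a rectangle at a corner (apply ``min of a concave function on an interval is at an endpoint'' first in $y$, then in $x$). It then remains to evaluate $\lambda^*$ at the corners $(-1,-1),(1,-1),(-1,0),(1,0)$ of $[-1,1]\times[-1,0]$ (the other piece of $\mathcal{D}$ being handled by symmetry) and to observe that only $(1,0)$ yields a value below $1$, namely $(a+b)(2+r)=d_2(r)$, while the remaining corners give values $\ge 1$. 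This pins down the global minimum.

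For the maximum I would pass to $s=x+y$ and $q=xy$, in which \eqref{eq:lambda-form-r} reads $\lambda^*=(a+bs+cq)(4+r-2s)$. On $\mathcal{D}$ the factor $g=4+r-2s\ge 2+r>0$ and $c>0$, so $\lambda^*$ is increasing in $q$; together with the elementary inequality $q\le s^2/4$ (with equality iff $x=y$) this gives $\lambda^*(x,y)\le \phi\bigl(\tfrac{s}{2}\bigr)$, where $\phi(t)=(a+2bt+ct^2)(4+r-4t)$ is the restriction of $\lambda^*$ to the diagonal $x=y=t$. The remaining work is the one–dimensional problem of maximizing $\phi$ over $t\in[-1,\tfrac12]$, the range of $(x+y)/2$ on $\mathcal{D}$. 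I would verify $\phi''\le 0$ and $\phi'(-1)\le 0$ on this interval, so that $\phi$ is decreasing and $\max\phi=\phi(-1)=(a-2b+c)(8+r)=d_1(r)$, attained at $x=y=-1\in\mathcal{D}$.

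The main obstacle is this last monotonicity check. Both $\phi''(t)\le 0$ and $\phi'(-1)\le 0$ reduce, after multiplying through by $N$, to sign statements for explicit polynomials in $t$ and $r$ (for example $N\,\phi'(-1)=-2r(r+2)\le 0$), and one must confirm that the diagonal endpoint $t=-1$ is feasible and that the bound $q\le s^2/4$ is tight there. The secondary difficulty is the union–of–rectangles shape of $\mathcal{D}$: one has to check that the extremizers $(-1,-1)$ and $(1,0)$ actually lie in $\mathcal{D}$ and that the range of $s$ (hence $t$) is computed on the correct region, though the $x\leftrightarrow y$ symmetry keeps this bookkeeping light.
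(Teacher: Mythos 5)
Your proof is correct, and it takes a genuinely different route from the paper's. The paper proceeds by classical two-variable optimization of $\psi(\eta_1,\eta_2)$: it computes both partial derivatives, shows by a discriminant computation that for $r>0$ the system $\psi'_{\eta_1}=\psi'_{\eta_2}=0$ has no real solution in the interior (the case $r=0$ is handled by citing earlier work, since there the critical point $(-1,-1)$ sits on the boundary), and then performs a six-segment walk around $\partial\mathcal{D}$ (cut to three segments by the same $x\leftrightarrow y$ symmetry you invoke), on each segment locating the axis of symmetry of a one-variable quadratic to read off endpoint extrema, and finally comparing the surviving candidates $\psi(-1,-1)$ and $\psi(0,0)$. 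You replace both halves by structural arguments: for the minimum, separate concavity in each variable (the $x^2$-coefficient $-2(b+cy)<0$ because $b>c$) pushes the minimum to a corner, and the corner values of $[-1,1]\times[-1,0]$ indeed work out to $d_1(r)>1$, $(a-c)(4+r)=1$, $(a-b)(6+r)=\frac{r+5}{r+4}>1$, and $d_2(r)=\frac{r+3}{r+4}<1$; for the maximum, the substitution $s=x+y$, $q=xy$ with $\partial\lambda^*/\partial q=c\,(4+r-2s)\ge c\,(2+r)>0$ and $q\le s^2/4$ collapses the problem to the diagonal function $\phi(t)$ on $t\in[-1,\tfrac12]$. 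Your two flagged sign checks do come out as claimed: with $N=(2+r)(4+r)(6+r)$ one finds $N\phi''(t)=-12(r+4)-48t\le-12r\le 0$ on $[-1,\tfrac12]$ and $N\phi'(-1)=-2r(r+2)\le 0$, so $\phi$ is nonincreasing and $\max\phi=\phi(-1)=d_1(r)$, with $q\le s^2/4$ tight at the feasible maximizer $(-1,-1)$; and as you correctly note, the bound $\lambda^*\le\phi(s/2)$ never requires intermediate diagonal points to lie in $\mathcal{D}$, only the final maximizer. Comparing the two approaches: the paper's route is mechanical and produces all boundary extrema as a by-product (it reuses values such as $\psi(1,-1)=(a-c)(4+r)$ in its final comparison step), whereas yours avoids the discriminant computation and the boundary casework entirely, treats $r=0$ and $r>0$ uniformly (at $r=0$ you get $\phi'(-1)=0$ and $\phi$ is still nonincreasing, so no separate case is needed), and reduces the two-dimensional maximization to a single one-dimensional monotonicity check --- at the price of the slightly less obvious $(s,q)$ reduction.
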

\begin{proof}
For simplicity,  let $\eta_1=\cos\theta_1$ and $\eta_2=\cos \theta_2$. Then,  we rewrite \eqref{eq:lambda-form-r} as
\begin{equation*} 
 \lambda^*=\psi(\eta_1,\eta_2) =(a+b\eta_1+b\eta_2 +c\eta_1\eta_2)(4+r-2\eta_1 -2\eta_2).
\end{equation*}

We first consider the critical   point of $ \psi(\eta_1,\eta_2) $ in $\mathcal{D}$, see \eqref{eq:cos-range},  by  computing the partial derivatives of $ \psi(\eta_1,\eta_2)$, which are given by
\begin{eqnarray}
\psi'_{\eta_1} (\eta_1,\eta_2)&=&rb+4b-2a-4b\eta_1+(4c+cr-4b)\eta_2-2c\eta_2^2-4c\eta_1\eta_2=0, \label{eq:partial-psi-eta1}  \\
\psi'_{\eta_2}(\eta_1,\eta_2) &=&  rb+4b-2a-4b\eta_2+(4c+cr-4b)\eta_1-2c\eta_1^2-4c\eta_1\eta_2=0. \label{eq:partial-psi-eta2} 
\end{eqnarray}
Subtracting \eqref{eq:partial-psi-eta2}  from \eqref{eq:partial-psi-eta1}  gives 
\begin{equation}\label{eq:critical-point-solu}
(\eta_1-\eta_2)\left(2(\eta_1+\eta_2)-4-r\right) =0.
\end{equation}
It follows that $\eta_1=\eta_2$ or  $2(\eta_1+\eta_2)-4-r=0$. However, $\eta_1+\eta_2< 2$, so the latter does not have a real solution. For $\eta_1=\eta_2$, we  
replace $\eta_2$ by $\eta_1$ in \eqref{eq:partial-psi-eta1}, leading to
\begin{equation}\label{eq:solve-eta1=0}
6c\eta_1^2-(4c+cr-8b)\eta_1-(rb+4b-2a)=0.
\end{equation}
We claim that there is no real solution for \eqref{eq:solve-eta1=0} for $r>0$. We will show that the discriminant  is not positive.
We first simplify  $rb+4b-2a$. Using  \eqref{eq:Vanka-a} and \eqref{eq:Vanka-b}  gives
\begin{eqnarray*}
rb+4b-2a &=&\frac{ 4+r}{(2+r)(6+r)}-  \frac{2(r^2+8r+14)}{(2+r)(4+r)(6+r)}\\
               &=&\frac{ (4+r)^2-2(r^2+8r+14)}{(2+r) (4+r)(6+r)}\\
               &=& -\frac{1}{4+r}.
\end{eqnarray*} 

 Using \eqref{eq:Vanka-b} and \eqref{eq:Vanka-c}, the discriminant of \eqref{eq:solve-eta1=0} is
\begin{eqnarray*}
\Phi &=& (4c+cr-8b)^2+4 \cdot 6c(rb+4b-2a)\\
      &=&\left(\frac{8+2r}{(2+r)(4+r)(6+r)} - \frac{8(4+r)}{(2+r)(4+r)(6+r)} \right)^2-\frac{48}{(2+r)(4+r)(6+r)} \frac{1}{4+r}\\
      &=&\left (\frac{-6}{(2+r)(6+r)}\right)^2-\frac{48}{(2+r)(4+r)^2(6+r)}\\
      &=& \frac{12}{(2+r)(6+r)}\left(\frac{3}{(2+r)(6+r)}-\frac{4}{(4+r)^2}\right)\\
      &=& \frac{-12r(r+8)}{(2+r)^2(4+r)^2(6+r)^2}\leq 0.
\end{eqnarray*}
The case $r=0$ has been discussed  \cite{CH2021addVanka}, where $\psi'_{\eta_1} (\eta_1,\eta_2)=\psi'_{\eta_2} (\eta_1,\eta_2)=0$ gives $(\eta_1,\eta_2)=(-1,-1)$ the boundary  point of  $\mathcal{D}$, and $\lambda^*_{\rm max}=\frac{4}{3}$.  When $r>0$,  \eqref{eq:critical-point-solu} has no real solution and $\psi(\eta_1,\eta_2)$ cannot have extreme values at interior of $\mathcal{D}$.  This means that we only need to find the   extreme values of $\psi(\eta_1,\eta_2)$ at the boundary of  $\mathcal{D}$, see \eqref{eq:cos-range}. To do this, we split the boundary of $\mathcal{D}$ as follows: 
\begin{eqnarray*}
\partial \mathcal{D}_1 &=&\{-1\} \times [-1,1], \\
\partial \mathcal{D}_2 &=& [-1,1] \times \{-1\}, \\
\partial \mathcal{D}_3 &=& \{1\} \times  [-1,0], \\
\partial  \mathcal{D}_4 &=& [0,1]\times \{0\}, \\
\partial \mathcal{D}_5 &=& \{0\} \times  [0,1], \\
 \partial \mathcal{D}_6&=& [-1,0] \times \{1\}.  
\end{eqnarray*} 
 
Due to the symmetry of $\psi(\eta_1,\eta_2)$, that is $\psi(\eta_1,\eta_2)=\psi(\eta_2,\eta_1)$, we only need to find the extreme values of $ \psi(\eta_1,\eta_2) $ at $\partial \mathcal{D}_1, \partial \mathcal{D}_3$  and $\partial \mathcal{D}_4$. We present below the  results.

\begin{enumerate}
\item For $(\eta_1,\eta_2) \in \partial \mathcal{D}_1$,
\begin{equation}\label{eq:psi-case1}
\psi(\eta_1,\eta_2) =\psi(-1,\eta_2) =(a-b+b\eta_2-c\eta_2)(6+r-2\eta_2).
\end{equation} 
Note that  the two roots of the quadratic form \eqref{eq:psi-case1} are $\frac{6+r}{2}$ and $\frac{a-b}{c-b}$.
Using \eqref{eq:Vanka}, we have
\begin{equation*}
\frac{a-b}{c-b}= -(5+r).
\end{equation*} 
Thus,  the axis of symmetry is $\eta_2=\frac{(6+r)/2-(5+r)}{2}=-1-\frac{r}{4}\leq -1$. Using the fact that $a>b>c$, see \eqref{eq:Vanka-a}, \eqref{eq:Vanka-b}, and \eqref{eq:Vanka-c}, the quadratic function opens downward.  Therefore, 
 the maximum and minimum of $\psi(-1,\eta_2) $ for $\eta_2 \in[-1,1]$ are  
\begin{align}
\begin{aligned}
\psi(-1,\eta_2) _{\rm max} &=\psi(-1,-1)  =(a-2b+c)(8+r),\label{eq:case1-max}\\
\psi(-1,\eta_2) _{\rm min} &=\psi(-1,1)  = (a-c)(4+r). 
\end{aligned}
\end{align}
\item For $(\eta_1,\eta_2) \in \partial \mathcal{D}_3$,
\begin{equation}\label{eq:psi-case2}
\psi(\eta_1,\eta_2) =\psi(1,\eta_2) =(a+b+b\eta_2+c\eta_2)(2+r-2\eta_2).
\end{equation} 
The two roots of quadratic form \eqref{eq:psi-case2} are $\frac{2+r}{2}$ and $-\frac{a+b}{b+c}$.
Using \eqref{eq:Vanka},  we have
\begin{equation*}
-\frac{a+b}{b+c}= -(3+r).
\end{equation*} 
Thus,  the axis of symmetry is $\eta_2=\frac{(2+r)/2-(3+r)}{2}=-1-\frac{r}{4}\leq -1$. Using the fact that $a>b>c$, the quadratic function opens downward.  It follows that
for $\eta_2 \in[-1,0]$, the maximum and minimum of $\psi(1,\eta_2) $ are given by
\begin{eqnarray*}
\psi(1,\eta_2) _{\rm max} &=&\psi(1,-1)  =(a-c)(4+r),\\
\psi(1,\eta_2) _{\rm min} &=&\psi(1,0)  = (a+b)(2+r).
\end{eqnarray*}

\item For $(\eta_1,\eta_2) \in \partial \mathcal{D}_4$,
\begin{equation*} 
\psi(\eta_1,\eta_2) =\psi(\eta_1,0) =(a+b\eta_1)(4+r-2\eta_1).
\end{equation*} 
Note that  the two roots of quadratic form \eqref{eq:psi-case2} are $\frac{4+r}{2}$ and $-\frac{a}{b}$.
Using \eqref{eq:Vanka-a} and \eqref{eq:Vanka-b}, we have
\begin{equation*}
-\frac{a}{b}= -(4+r)+\frac{2}{4+r}.
\end{equation*} 
Thus,  the axis of symmetry is $\eta_1=\frac{(4+r)/2-(4+r)+\frac{2}{4+r}}{2}=-1-\frac{r}{4}+\frac{1}{4+r}< 0$.  Thus, the maximum and minimum of $\psi(\eta_1,0) $ for $\eta_1 \in[0,1]$ are  
 
\begin{eqnarray}
\psi(\eta_1,0) _{\rm max} &=&\psi(0,0)  =a(4+r),\label{eq:case3-max}\\
\psi(\eta_1,0) _{\rm min} &=&\psi(0,1)  = (a+b)(2+r). \nonumber
\end{eqnarray}
\end{enumerate}

Based on the above discussions,  the minimum of $\psi(\eta_1,\eta_2) $ over $\partial \mathcal{D}$ is
\begin{equation*}
\psi(\eta_1,\eta_2) _{\rm min} =\psi(1,0)=\psi(0,1)  = (a+b)(2+r).
\end{equation*} 
Next, we compare $\psi(-1,-1)$ (see \eqref{eq:case1-max}) and  $\psi(0,0)$ (see \eqref{eq:case3-max})  to determine the maximum.
Using   \eqref{eq:Vanka},  we have 
\begin{eqnarray*}
\psi(-1,-1)- \psi(0,0) &=&(a-2b+c)(8+r)-a(4+r)\\
                              &=& 4a+(c-2b)(8+r)\\
                              & =&\frac{4(r^2+8r+14)}{(2+r)(4+r)(6+r)} - \frac{2-2(4+r)}{(2+r)(4+r)(6+r)}(8+r)\\
                              &=& \frac{2r^2+10r+8}{(2+r)(4+r)(6+r)}>0.
\end{eqnarray*}
It follows that the maximum of $\psi(\eta_1,\eta_2)$ is given by
 \begin{equation*}
\psi(\eta_1,\eta_2) _{\rm max} =\psi(-1,-1) =(a-2b+c)(8+r).
\end{equation*} 
Thus, for $(\eta_1,\eta_2) \in \mathcal{D}$, the maximum and minimum of $\psi(\eta_1,\eta_2)$  are  $\psi(-1,-1)$ and $\psi(1,0)=\psi(0,1)$, respectively. 
\end{proof}

Based on the results in Theorem \ref{thm:maxmin-and-theta}, we can further estimate the range of  extreme values of $\lambda^*$, which plays an important role in determining the optimal smoothing factor for V-BSR.

\begin{theorem}\label{thm:form-d1-d2}
Suppose $r\in[0,\infty)$. Then,
\begin{equation}\label{eq:explicit-form-low-up-bound}
d_1(r)=\frac{8+r}{6+r}, \quad d_2(r)=\frac{3+r}{4+r}. 
\end{equation}
Furthermore, 
\begin{eqnarray*}
1< &d_1(r)& \leq \frac{4}{3}, \\
 \frac{3}{4}\leq &d_2(r)& <1.
\end{eqnarray*} 
\end{theorem}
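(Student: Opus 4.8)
The plan is to establish Theorem \ref{thm:form-d1-d2} in two stages: first derive the closed-form expressions for $d_1(r)$ and $d_2(r)$ from the definitions in Theorem \ref{thm:maxmin-and-theta}, and then extract the monotonicity and bounds. Recall that $d_1(r)=\lambda^*(r;-1,-1)=(a-2b+c)(8+r)$ and $d_2(r)=\lambda^*(r;1,0)=(a+b)(2+r)$, where $a,b,c$ are the explicit rational functions of $r$ given in \eqref{eq:Vanka-a}, \eqref{eq:Vanka-b}, and \eqref{eq:Vanka-c}. So the first step is purely algebraic: substitute these formulas into $a-2b+c$ and $a+b$, put everything over the common denominator $(2+r)(4+r)(6+r)$, and simplify the numerators.

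For $d_2(r)$, I would compute $a+b=\frac{r^2+8r+14}{(2+r)(4+r)(6+r)}+\frac{1}{(2+r)(6+r)}$; after writing the second term as $\frac{4+r}{(2+r)(4+r)(6+r)}$ and adding, the numerator becomes $r^2+9r+18=(3+r)(6+r)$, so $a+b=\frac{(3+r)(6+r)}{(2+r)(4+r)(6+r)}=\frac{3+r}{(2+r)(4+r)}$, and multiplying by $(2+r)$ yields $d_2(r)=\frac{3+r}{4+r}$. For $d_1(r)$, I would similarly compute $a-2b+c$ over the common denominator; the numerator of $-2b$ is $-2(4+r)$ and that of $c$ is $2$, so the combined numerator is $r^2+8r+14-2(4+r)+2=r^2+6r+8=(2+r)(4+r)$, giving $a-2b+c=\frac{(2+r)(4+r)}{(2+r)(4+r)(6+r)}=\frac{1}{6+r}$, and multiplying by $(8+r)$ yields $d_1(r)=\frac{8+r}{6+r}$. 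This establishes \eqref{eq:explicit-form-low-up-bound}.

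The bounds then follow from elementary monotonicity. Writing $d_1(r)=\frac{8+r}{6+r}=1+\frac{2}{6+r}$, the term $\frac{2}{6+r}$ is strictly positive and strictly decreasing in $r$ on $[0,\infty)$, so $d_1(r)>1$ for all $r$ and $d_1(r)$ is maximized at $r=0$, giving $d_1(0)=\frac{8}{6}=\frac{4}{3}$; hence $1<d_1(r)\leq\frac{4}{3}$. Similarly, writing $d_2(r)=\frac{3+r}{4+r}=1-\frac{1}{4+r}$, the term $\frac{1}{4+r}$ is strictly positive and strictly decreasing, so $d_2(r)<1$ and $d_2(r)$ is minimized at $r=0$, giving $d_2(0)=\frac{3}{4}$; hence $\frac{3}{4}\leq d_2(r)<1$.

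I do not anticipate a genuine obstacle here, since the result reduces to routine rational-function simplification followed by a one-line rewrite $\frac{8+r}{6+r}=1+\frac{2}{6+r}$ and $\frac{3+r}{4+r}=1-\frac{1}{4+r}$ that exposes both the sign and the monotonicity at once. The only place demanding care is the algebra in clearing denominators for $d_1$ and $d_2$: one must track the factors correctly so that the cancellation producing $\frac{1}{6+r}$ and $\frac{3+r}{4+r}$ is verified rather than assumed. Consistency can be checked against the already-known endpoint value $\lambda^*_{\max}=\frac{4}{3}$ at $r=0$ quoted in the proof of Theorem \ref{thm:maxmin-and-theta}, which matches $d_1(0)=\frac{4}{3}$.
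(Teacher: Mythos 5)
Your proposal is correct and follows essentially the same route as the paper: substitute the explicit formulas \eqref{eq:Vanka-a}--\eqref{eq:Vanka-c} into $a-2b+c$ and $a+b$ over the common denominator $(2+r)(4+r)(6+r)$, cancel to obtain $d_1(r)=\frac{8+r}{6+r}$ and $d_2(r)=\frac{3+r}{4+r}$, then conclude the bounds by monotonicity with extremes at $r=0$. Your rewrites $d_1(r)=1+\frac{2}{6+r}$ and $d_2(r)=1-\frac{1}{4+r}$ are a slightly more explicit way of justifying the monotonicity (and the strict inequalities $d_1>1$, $d_2<1$) that the paper simply asserts, but this is a presentational refinement rather than a different argument.
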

\begin{proof}
Using \eqref{eq:Vanka},  we simplify $d_1(r)$ as follows:
\begin{eqnarray*}
d_1(r) &=&(a-2b+c)(8+r) \\
          &=& \frac{r^2+8r+14-2(4+r)+2}{(2+r)(4+r)(6+r)}(8+r)\\
          &=& \frac{8+r}{6+r}.
\end{eqnarray*} 
Since $d_1(r)$ is a decreasing function of $r$, $\max_r d_1(r)=d_1(0)=\frac{4}{3}$.

Using \eqref{eq:Vanka-a} and \eqref{eq:Vanka-b},  we  have
\begin{eqnarray*}
d_2(r) &=&(a+b)(2+r) \\
          &=& \frac{r^2+8r+14+4+r}{(2+r)(4+r)(6+r)} (2+r)\\
          &=&\frac{r+3}{r+4}.
\end{eqnarray*}
Since $d_2(r)$ is an increasing function of $r$, $\min_r d_2(r)=d_2(0)=\frac{3}{4}$. 
\end{proof}

Now, we are able to derive the optimal smoothing  factor for  V-BSR for the Stokes-Darcy Brinkman problems.
\begin{theorem}\label{thm:opt-mu-omega}
For the V-BSR relaxation scheme \eqref{eq:BSR-relax-scheme} the optimal smoothing factor is given by
\begin{equation}\label{eq:opt-mu-form-r}
\mu_{\rm opt}(r)=\min_{\omega} \max_{\boldsymbol{\theta} \in T^{\rm H}} \{|1-\omega|, |1-\omega \lambda^*|\} = \frac{3r+14}{2r^2+21r+50},
\end{equation}
provided that  
\begin{equation}\label{eq:opt-omega}
\omega =\omega_{\rm opt}= \frac{2r^2+20r+48}{2r^2+21r+50}.
\end{equation}
Moreover,
\begin{equation*}
\mu_{\rm opt}(r)\leq \frac{7}{25}=0.28.
\end{equation*} 

\end{theorem}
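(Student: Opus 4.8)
The plan is to reduce the minimax to an elementary one-dimensional optimization over $\omega$ by first collapsing the dependence on $\boldsymbol{\theta}$. Since the three eigenvalues of $\widetilde{\mathcal{M}}^{-1}\widetilde{\mathcal{K}}$ are $1$, $1$, and $\lambda^*$, the symbol $\widetilde{\mathcal{S}}=I-\omega\widetilde{\mathcal{M}}^{-1}\widetilde{\mathcal{K}}$ has eigenvalues $1-\omega$, $1-\omega$, and $1-\omega\lambda^*$, so $\rho(\widetilde{\mathcal{S}})=\max\{|1-\omega|,|1-\omega\lambda^*|\}$. The first term is independent of $\boldsymbol{\theta}$; for the second, the map $\lambda\mapsto|1-\omega\lambda|$ is convex, so over the interval in which $\lambda^*$ ranges its maximum is attained at an endpoint. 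By Theorem \ref{thm:maxmin-and-theta} those endpoints are $d_1(r)$ and $d_2(r)$, hence
\[
\mu_{\rm loc}(\mathcal{S}_h)=\max\{|1-\omega|,\,|1-\omega d_1|,\,|1-\omega d_2|\}.
\]

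Next I would minimize this piecewise-linear, convex function of $\omega$, restricting to $\omega>0$ (for $\omega\le 0$ already $|1-\omega|\ge 1$). All three terms equal $1$ at $\omega=0$ and decrease with slopes $-1$, $-d_1$, $-d_2$; since Theorem \ref{thm:form-d1-d2} gives $d_2(r)<1<d_1(r)$, the term $1-\omega d_2$ is the largest while positive and $\omega d_1-1$ is the largest in magnitude while negative. The envelope $\max\{1-\omega d_2,\,\omega d_1-1\}$ is therefore minimized at the crossing point $1-\omega d_2=\omega d_1-1$; solving gives
\[
\omega=\frac{2}{d_1+d_2},\qquad \mu_{\rm opt}=\frac{d_1-d_2}{d_1+d_2}.
\]
To confirm this is the minimum of the full three-term maximum, I would check that the middle term is dominated at this $\omega$: since $d_1+d_2>2$ (immediate from the explicit forms), $\omega<1$, so $|1-\omega|=1-\omega=(d_1+d_2-2)/(d_1+d_2)$, and the inequality $|1-\omega|\le\mu_{\rm opt}$ reduces to $d_2\le 1$, which holds by Theorem \ref{thm:form-d1-d2}.

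It then remains to substitute the closed forms $d_1(r)=\frac{8+r}{6+r}$ and $d_2(r)=\frac{3+r}{4+r}$ and simplify. Placing both over the common denominator $(4+r)(6+r)$ yields $d_1+d_2=\frac{2r^2+21r+50}{r^2+10r+24}$ and $d_1-d_2=\frac{3r+14}{r^2+10r+24}$, from which $\omega_{\rm opt}=\frac{2r^2+20r+48}{2r^2+21r+50}$ and $\mu_{\rm opt}(r)=\frac{3r+14}{2r^2+21r+50}$, matching \eqref{eq:opt-omega} and \eqref{eq:opt-mu-form-r}. For the uniform bound I would show $\mu_{\rm opt}$ is decreasing on $[0,\infty)$: the numerator of $\mu_{\rm opt}'(r)$ is $3(2r^2+21r+50)-(3r+14)(4r+21)=-6r^2-56r-144<0$, so the maximum is attained at $r=0$, giving $\mu_{\rm opt}(0)=\frac{14}{50}=\frac{7}{25}$.

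The main obstacle I anticipate is the step that reduces the $\boldsymbol{\theta}$-maximum to the two endpoints and then correctly identifies which two of the three linear branches are active at the optimum: one must verify both that $|1-\omega\lambda^*|$ peaks at $\lambda^*\in\{d_1,d_2\}$ and that the $\lambda^*=1$ branch never controls the optimum, since an incorrect branch would yield the wrong $\omega_{\rm opt}$. The algebraic simplifications are routine once the correct extremal structure is in hand.
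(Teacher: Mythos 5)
Your proposal is correct and follows essentially the same route as the paper's proof: reduce the high-frequency maximum to the endpoint values $d_1(r)$ and $d_2(r)$ via Theorem \ref{thm:maxmin-and-theta}, balance $|1-\omega d_1|=|1-\omega d_2|$ to obtain $\omega_{\rm opt}=2/(d_1+d_2)$ and $\mu_{\rm opt}=(d_1-d_2)/(d_1+d_2)$, and then substitute the closed forms from Theorem \ref{thm:form-d1-d2}; your additional checks (convexity of $\lambda\mapsto|1-\omega\lambda|$ to justify the endpoint reduction, dominance of the $|1-\omega|$ branch at the optimum since $d_2\le 1$, and monotonicity of $\mu_{\rm opt}$ for the bound $7/25$) merely make explicit steps the paper leaves implicit. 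Incidentally, your derivative numerator $-6r^2-56r-144=-2(3r^2+28r+72)$ is the correct one, whereas Proposition \ref{prop:optimal-mu-decrease} in the paper writes $-2(r^2+28r+72)$, an apparent typo that does not affect the sign or the conclusion.
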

 \begin{proof}
From Theorem \ref{thm:maxmin-and-theta} and \eqref{eq:explicit-form-low-up-bound}, we know  that
 \begin{equation}\label{eq:lambda*-two-roots}
  \max_{\boldsymbol{\theta} \in T^{\rm H}} \{|1-\omega|, |1-\omega \lambda^*|\} =\max\{|1-\omega d_2(r)|, |1-\omega d_1(r)| \}.
  \end{equation}
  To minimize $\max_{\boldsymbol{\theta} \in T^{\rm H}} \{|1-\omega \lambda^*|\}$, we require
  \begin{equation*}
|1-\omega d_2(r)|= |1-\omega d_1(r)|,
\end{equation*} 
which gives $\omega_{\rm opt}(r) =\frac{2}{d_1(r)+d_2(r)}$.  Using Theorem \ref{thm:form-d1-d2}, we obtain 
\begin{equation*}
 \omega_{\rm opt}(r) =\frac{2}{d_1(r)+d_2(r)} = \frac{2r^2+20r+48}{2r^2+21r+50}
\end{equation*} 
and
\begin{equation*}
\mu_{\rm opt}(r) =\frac{d_1(r)-d_2(r)}{d_1(r)+d_2(r)} = \frac{3r+14}{2r^2+21r+50}\leq \frac{7}{25}.
\end{equation*} 
 \end{proof}
 
 \begin{remark}
 When $r=0$, Theorem \ref{thm:opt-mu-omega} is consistent with the existing results \cite{CH2021addVanka},  which amount to applying an element-wise Vanka smoother  to  the Poisson equation. 
 \end{remark}
 \begin{proposition}\label{prop:optimal-mu-decrease}
 $\mu_{\rm opt}(r)$ given by \eqref{eq:opt-mu-form-r} is a decreasing function of $r$.  
 \end{proposition}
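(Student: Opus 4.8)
The plan is to establish that $\mu_{\rm opt}'(r) < 0$ for every $r \in [0,\infty)$, and I see two natural routes. The most direct is to differentiate the explicit rational expression $\mu_{\rm opt}(r)=\frac{3r+14}{2r^2+21r+50}$ by the quotient rule. Writing $N(r)=3r+14$ and $D(r)=2r^2+21r+50$, the sign of $\mu_{\rm opt}'$ is governed entirely by $N'(r)D(r)-N(r)D'(r)$. With $N'=3$ and $D'=4r+21$, expanding and collecting terms collapses this to a quadratic in $r$ all of whose coefficients are negative, hence it is strictly negative on $[0,\infty)$; since $D(r)^2>0$, this yields $\mu_{\rm opt}'(r)<0$.

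A cleaner, more structural route exploits the factored form $\mu_{\rm opt}=\frac{d_1-d_2}{d_1+d_2}$ recorded in Theorem \ref{thm:opt-mu-omega}, together with the monotonicity established inside Theorem \ref{thm:form-d1-d2}: $d_1(r)=\frac{8+r}{6+r}$ is decreasing (so $d_1'<0$) and $d_2(r)=\frac{3+r}{4+r}$ is increasing (so $d_2'>0$), while both are positive by the bounds $1<d_1\le \frac{4}{3}$ and $\frac{3}{4}\le d_2<1$. Setting $f=d_1-d_2$ and $g=d_1+d_2$, the numerator of $(f/g)'$ simplifies sharply, since the $d_1 d_1'$ and $d_2 d_2'$ terms cancel, leaving $f'g-fg'=2(d_1' d_2 - d_2' d_1)$. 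Under the sign conditions above, each summand is manifestly negative, so $\mu_{\rm opt}'<0$ immediately.

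I expect essentially no genuine obstacle here: the statement is a routine monotonicity check on a rational function. The only place a mistake could slip in is the algebraic bookkeeping of the direct derivative computation, where a sign error in expanding $(3r+14)(4r+21)$ would derail the conclusion. The structural route sidesteps this entirely by reducing everything to the already-proved signs of $d_1, d_2, d_1', d_2'$, and is therefore the safer and more transparent argument to present; the explicit computation can serve as an independent verification that the two quadratics match.
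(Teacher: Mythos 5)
Your proposal is correct. Your first route---quotient-rule differentiation of the explicit expression \eqref{eq:opt-mu-form-r}---is precisely the paper's proof, which simply reports $\mu'_{\rm opt}=\frac{-2(r^2+28r+72)}{(2r^2+21r+50)^2}<0$; carrying out your expansion gives $3(2r^2+21r+50)-(3r+14)(4r+21)=-6r^2-56r-144=-2(3r^2+28r+72)$, so the paper's stated numerator actually contains a typo (the leading term should be $3r^2$, not $r^2$), which your ``all coefficients negative'' check both detects and renders harmless, since strict negativity on $[0,\infty)$ holds either way. Your second route is genuinely different from the paper's: writing $\mu_{\rm opt}=(d_1-d_2)/(d_1+d_2)$ as in the proof of Theorem \ref{thm:opt-mu-omega}, the cancellation $f'g-fg'=2(d_1'd_2-d_2'd_1)$ reduces the claim to the signs $d_1'<0$, $d_2'>0$, $d_1>0$, $d_2>0$, all available from Theorem \ref{thm:form-d1-d2} (e.g.\ via $d_1=1+\tfrac{2}{6+r}$ and $d_2=1-\tfrac{1}{4+r}$). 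What the structural route buys is immunity to exactly the bookkeeping risk you flag, plus an explanation of \emph{why} the monotonicity holds: the spread $d_1-d_2$ shrinks while the sum $d_1+d_2$ grows, so the ratio must fall; it also generalizes verbatim to any pair of positive functions with $d_1$ decreasing and $d_2$ increasing, whereas the paper's computation is tied to the specific rational expression. Its only cost is that the monotonicity of $d_1$ and $d_2$ appears inside the proof of Theorem \ref{thm:form-d1-d2} rather than in its statement, so you should cite or re-derive it explicitly. Presenting the structural argument with the direct computation as a cross-check, as you suggest, is sound, and here the cross-check even pays off by catching the paper's algebraic slip.
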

 \begin{proof}
The derivative of $\mu_{\rm opt}$ is  given by
 \begin{equation*}
 \mu'_{\rm opt} =  \frac{-2(r^2+28r+72)}{(2r^2+21r+50)^2}<0.
\end{equation*} 
 This suggests that when $r$ increases, the optimal smoothing factor decreases. 
 \end{proof}

Let us look at the optimal parameter, \eqref{eq:opt-omega}. It can be shown that
 \begin{equation*}
 \omega'_{\rm opt}(r) =  \frac{2(r^2+4r-4)}{(2r^2+21r+50)^2}.
\end{equation*} 
When $r \in[0, 2\sqrt{2}-2]$, $\omega_{\rm opt}(r)$ is decreasing and  when $r \in[2\sqrt{2}-2, \infty)$, $\omega_{\rm opt}(r)$ is increasing. It follows that
 \begin{equation}\label{eq:estimate-opt-omega}
(\omega_{\rm opt}(r))_{\rm min}=\omega_{\rm opt}(2\sqrt{2}-2) = \frac{(\sqrt{2}-1)(4\sqrt{2}+16)+24}{(\sqrt{2}-1)(4\sqrt{2}+17)+25}\approx 0.959 \leq \omega_{\rm opt}(r) <1.
\end{equation} 
 Thus, for simplicity, if we take $\omega=1$, then \eqref{eq:lambda*-two-roots} gives
 \begin{equation}\label{eq:omega-one-mu}
\mu(\omega=1)=\max_{\omega=1}\{|1-\omega d_2(r)|, |1-\omega d_1(r)| \}=\frac{2}{6+r}\leq \frac{1}{3}.
\end{equation} 

In practice, we can consider $\omega=1$.  In multigrid, for fixed $\epsilon$, in each level, the relaxation schemes has a different convergence speed in each level, which can be computed from \eqref{eq:omega-one-mu}.   However, note that \eqref{eq:omega-one-mu} is a decreasing function of $r=\frac{h^2}{\epsilon^2}$ or $h$. This means that at the coarse level, the relaxation scheme has a smaller convergence speed compared with that of the fine level.  
 
 \section{Numerical experiments}\label{sec:numerical}
 In this section, we first compute LFA two-grid convergence factors using two choices of the damping parameter, that is, $\omega=1$ and $\omega=\omega_{\rm opt}$, then we report V-cycle multigrid results for different values of the physical parameter $\epsilon$.
 
\subsection{LFA prediction}
We compute the two-grid LFA convergence factor \eqref{eq:LFA-two-grid-convergence-factor},  using   $h=1/64,\  \omega=1$ and  optimal $\omega$, see \eqref{eq:opt-omega}, derived from optimizing LFA smoothing factors for different $\epsilon$.   From Table \ref{tab:LFA-results-h64-omega} we see a strong agreement between two-grid convergence factors $\rho_h(1)$  and  the LFA smoothing factors.  Moreover,  the convergence factors  for optimal $\omega$ are slightly better than those for $\omega=1$, which is reasonable  since the optimal $\omega$, see \eqref{eq:estimate-opt-omega}, is very close to $1$.  From our smoothing analysis, we know that even though  the smoothing factor is dependent on $h$ and $\alpha$,  the upper bound on the smoothing factor is $\frac{1}{3}$.  This is also confirmed by our two-grid LFA convergence factor $\rho_h(1)$ in Table \ref{tab:LFA-results-h64-omega}.

 \begin{table}[H]
 \caption{Two-grid LFA convergence factors, $\rho_h(\nu)$ with $h=1/64$ and different choices of $\omega$.}
\centering
\begin{tabular}{lccccc}
\hline
$\epsilon,\omega=1$                      & $\mu_{\rm opt}$        &$\rho_h(1)$      &$\rho_h(2)$     &$\rho_h(3)$    & $\rho_h(4)$  \\ \hline
 
$1$                                        & 0.333                          & 0.333           &   0.119                   & 0.054                  &0.043    \\
$ 2^{-2} $                                  &0.333                         & 0.333                & 0.119                 & 0.054        & 0.042     \\
$2^{-4}$                                  &0.330                        &  0.330                &0.115                  &0.052           &0.040     \\
$2^{-6}$                                   &0.286                        & 0.286               & 0.082                &  0.023          &   0.012   \\
$2^{-8}$                                   &  0.091                       & 0.091              & 0.008                & 0.001            &0.000     \\
\hline
$1, \omega_{\rm opt}$                   &0.280                         &0.280                   & 0.096              &  0.056          &  0.044     \\

$2^{-2}$         & 0.280                        &0.280                 &0.096                 & 0.056            &0.044     \\

$2^{-4}$         & 0.276                       & 0.276                & 0.093              & 0.055         &0.042      \\

$2^{-6}$          & 0.233                       &0.233               &  0.057             &   0.026          &    0.014      \\

$2^{-8}$          & 0.069                        & 0.069                 & 0.005                & 0.000          &  0.000         \\
\hline
\end{tabular}\label{tab:LFA-results-h64-omega}
\end{table}

To illustrate how the smoothing factor changes as a function of $r$, we plot $\mu_{\rm opt}$ defined in \eqref{eq:opt-mu-form-r} and  $\mu(\omega=1)$ in \eqref{eq:omega-one-mu}  as   functions of $r$ in  Figure \ref{fig: mu-vs-r}.   It is evident that when $r$ increases, the smoothing factor decreases and approaches zero, and $\mu(\omega=1)$ tends towards  $\mu_{\rm opt}$.

\begin{figure}[H]
\centering
\includegraphics[width=0.6\textwidth]{./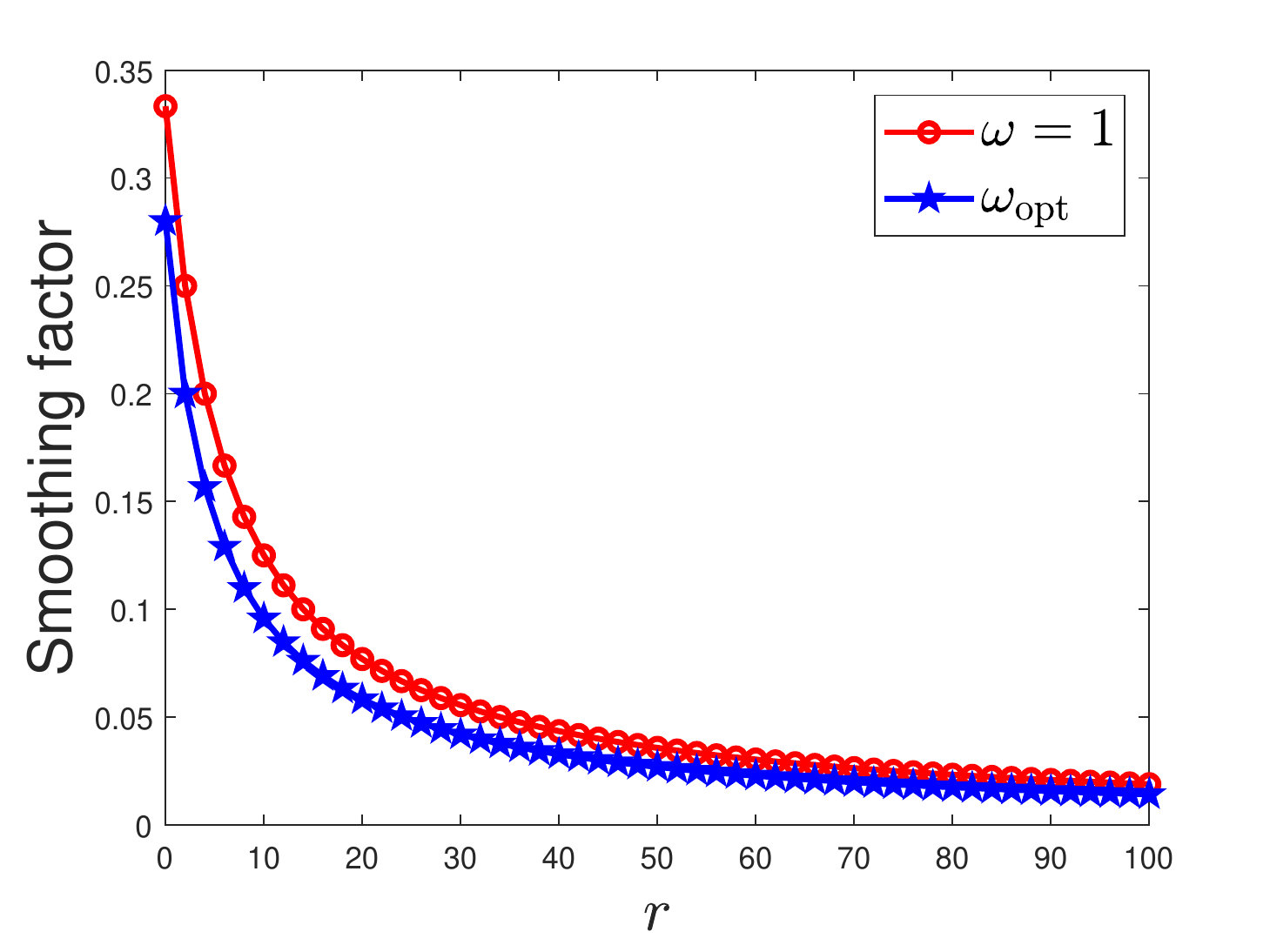}
 \caption{Smoothing  factors with optimal $\omega$ and $\omega=1$.}\label{fig: mu-vs-r}
\end{figure}

 \subsection{Multigrid performance}
 
 Consider the model problems \eqref{eq:SDB}  on the unit square domain $[0,1]\times[0,1]$ with an exact solution \cite[Section 5]{sun2019stability},   
 and given by 
\begin{align*}
 u(x,y) &=\pi \sin^2(\pi x) \sin(2\pi y),\\
 v(x,y) &=-\pi \sin(2\pi x) \sin^2(\pi y),\\
 p(x,y) &=\sin(\pi y) -\frac{2}{\pi},
\end{align*} 
with $g=0$. The source term is computed  via $\boldsymbol{f}=(f_1,f_2)=- \epsilon^2 \Delta\boldsymbol{u} +\boldsymbol{u} +  \nabla  p$,  and it is
\begin{align*}
f_1 &=(4\pi^3\epsilon^2+\pi)\sin^2(\pi x) \sin(2\pi y)-2\pi^3\epsilon^2\cos(2\pi x)\sin(2\pi y), \\
f_2& = -(4\pi^3\epsilon^3+\pi)\sin(2\pi x)\sin^2(\pi y)+2\pi^3\epsilon^2\sin(2\pi x)\cos(2\pi y)+\pi \cos(\pi y).
\end{align*}

To validate our theoretical LFA predictions, we compute the actual multigrid convergence factors by
\begin{equation*}
\hat{\rho}^{(k)}_h=\left( \frac{||r_k||}{||r_0||}\right)^{1/k},
\end{equation*}
where $r_k=b_h-\mathcal{K}_h\boldsymbol{z}_k$ is the residual  and $\boldsymbol{z}_k$ is the $k$-th multigrid iteration. The initial guess is chosen randomly.  In our test, we report  $\hat{\rho}^{(k)}_h=:\hat{\rho}_h $ with  the smallest $k$ such that $||r_k||/|r_0|\leq 10^{-10}$.  

As mentioned before, computing the exact solution of the Schur complement system \eqref{eq:solution-schur-complement} is expensive. For our multigrid tests, we apply a few weighted ($\omega_J$) Jacobi iterations to the Schur complement system. We choose $\omega_J=0.8$ which seems more robust to $\epsilon$. The number of Jacobi iterations is set as  three. 

\subsubsection{Two-grid results}
We first report actual two-grid convergence factor  using  $h=1/64$ and  three Jacobi iterations for solving the Schur complement.    Table \ref{tab:TG-omega1} shows that two-grid actual performance using $\omega=1$ matches with  the LFA predictions in  Table \ref{tab:LFA-results-h64-omega}, except for a small difference for a $\epsilon =2^{-8}$, which might suggest that more iterations are needed for the Schur complement system. However, due to the satisfactory convergence factor of the actual performance, we do not further explore this.  Using optimal $\omega$, Table \ref{tab:TG-omega2} shows that the actual two-grid performance matches two-grid LFA predictions reported in Table   \ref{tab:LFA-results-h64-omega}, except for $\epsilon=2^{-6}, 2^{-8}$. Again,  the measured convergence factor is satisfactory, and there is no need to consider more Jacobi iterations for the Schur complement system.

 \begin{table}[H]
 \caption{Two-grid measured convergence factor, $\hat{\rho}_h(\nu)$, using three Jacobi iterations for solving the Schur complement,  $h=1/64$ and   $\omega=1$.}
\centering
\begin{tabular}{lcccc }
\hline
$  \epsilon, \omega=1 $                               &$\hat{\rho}_h(1)$      &$\hat{\rho}_h(2)$     &$\hat{\rho}_h(3)$    & $\hat{\rho}_h(4)$  \\ \hline
 
$1$                                                             & 0.319                  &0.111            &0.033       &   0.023     \\
$2^{-2} $                                                     &0.317                  & 0.109             &0.033         &  0.023   \\
$2^{-4} $                                                    & 0.300                 &  0.094            & 0.029       &  0.021    \\
$2^{-6}$                                                     & 0.209                 &0.047             &0.023       & 0.015       \\
$2^{-8} $                                                     &0.145                   & 0.035             &0.020         & 0.015    \\
\hline
 \end{tabular}\label{tab:TG-omega1}
\end{table}

 \begin{table}[H]
 \caption{Two-grid measured convergence facto, $\hat{\rho}_h(\nu)$, using three Jacobi iterations for solving the Schur complement, $h=1/64$ and  $\omega_{\rm opt}$, see \eqref{eq:opt-omega}.}
\centering
\begin{tabular}{lcccc }
\hline
$\epsilon, \omega_{\rm opt}$                               &$\hat{\rho}_h(1)$      &$\hat{\rho}_h(2)$     &$\hat{\rho}_h(3)$    & $\hat{\rho}_h(4)$  \\ \hline
 
$1$                                                             & 0.266                & 0.082            &0.030       &   0.023      \\
$2^{-2} $                                                    &  0.264             &0.080              & 0.030         &  0.024    \\
$2^{-4}$                                                    &  0.248               &0.068           &0.029            & 0.023   \\
$2^{-6}$                                                     &0.163                &  0.044           & 0.024          &  0.016   \\
$2^{-8}$                                                     & 0.165                &0.040               & 0.019         &0.015   \\
\hline
 \end{tabular}\label{tab:TG-omega2}
\end{table}

\subsubsection{V(1,1)-cycle results}
A two-grid method is computationally costly since  we have to  solve the coarse problem directly and if the initial mesh is fine, then the next coarser mesh may give rise to a large problem as well.  In practice, deeply-nested W-cycle and V-cycle are preferred.  We now explore the V(1,1)-cycle multigrid methods with two choices of $\omega$ and varying values of the physical parameter $\epsilon$.  In order to study the sensitivity of solving the Schur complement system, we consider  one, two, and three Jacobi iterations for Schur complement system.  We consider different $n\times n$ finest meshgrids, where $n=32,  64,128, 256$.

{\bf One iteration for Schur complement system:} We first report the iteration counts for  V(1,1)-cycle multigrid methods using one iteration of Jacobi relaxation for the Schur complement system in Table \ref{tab:Itn-Schur-Jacobi-number-one} to achieve the tolerance $||r_k||/|r_0|\leq 10^{-10}$. We see that using $\omega=1$ and optimal $\omega$ give similar performance. When $\epsilon =2^{-6}, 2^{-8}$, the iteration count increase dramatically.
To mitigate the effect of this degradation, we will consider two or three Jacobi iterations for the Schur complement system.

 \begin{table}[H]
 \caption{Iteration accounts for V(1,1)-cycle multigrid with one Jacobi iteration for solving the Schur complement.}
\centering
\begin{tabular}{lcccc }
\hline
$\epsilon, \omega=1$                               &$n=32$      &$n=64$     &$n=128$    & $n=256$  \\ 
 
$1$                                                             &  13                &   13               &13        &  15            \\
$2^{-2} $                                                   &12                  & 13                 & 13       &  14           \\
$2^{-4}$                                                    &11                 &  11               &  12          &  12         \\
$2^{-6}$                                                    &23                  & 18                 &13        &  11           \\
$2^{-8}$                                                     & 50                 &   50                &47      &  34            \\
\hline
$\epsilon, \omega_{\rm opt}$                   &$n=32$      &$n=64$     &$n=128$    & $n=256$  \\ 
 
$1$                                                             & 12                   &12                & 12         & 15           \\
$2^{-2} $                                                    & 12                  & 12                  &  12       &  14         \\
$2^{-4}$                                                    & 11                  & 11                  &11          &  11        \\
$2^{-6}$                                                    &  26                  &19               &  14       &  12          \\
$2^{-8}$                                                    &50                  &50                 &50       & 38            \\
\hline
 \end{tabular}\label{tab:Itn-Schur-Jacobi-number-one}
\end{table}

{\bf Two iterations for Schur complement system:} We report the convergence history of the relative residual norm $\frac{||r_k||}{||r_0||}$  as a function of the number of V(1,1)-cycles using two Jacobi iterations for Schur complement system.  Figure \ref{fig:V-vs-J2-eps0} reports the results for $\epsilon=1$. We see that using optimal $\omega$ takes 12  V(1,1)-cycle iterations to achieve the stopping tolerance and it takes 13 iterations for $\omega=1$. The convergence behavior is independent of meshsize $h$.  A similar performance is seen for  $\epsilon=2^{-2}, 2^{-4}, 2^{-6}, 2^{-8}$ in Figures  \ref{fig:V-vs-J2-eps2}, \ref{fig:V-vs-J2-eps4},  \ref{fig:V-vs-J2-eps6} and  \ref{fig:V-vs-J2-eps8}.   Observe that for smaller values of $\epsilon$, the iteration count does not increase.   Using optimal $\omega$  has   one iteration number  fewer than that  of $\omega=1$.  Thus, it is simple and reasonable to use $\omega=1$ in practice.

\begin{figure}[h!] 
\centering
\includegraphics[width=0.49\textwidth]{./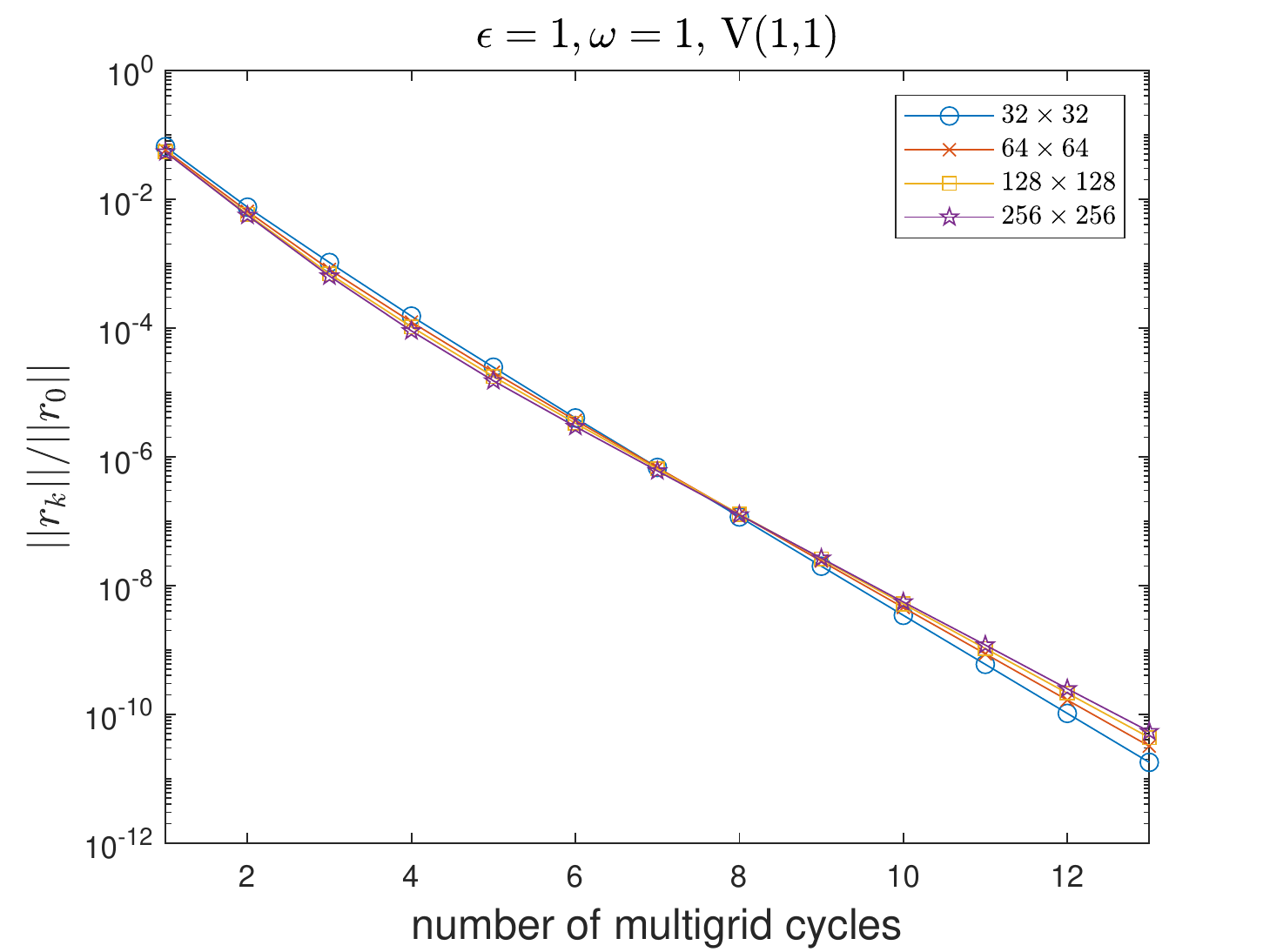}
\includegraphics[width=0.49\textwidth]{./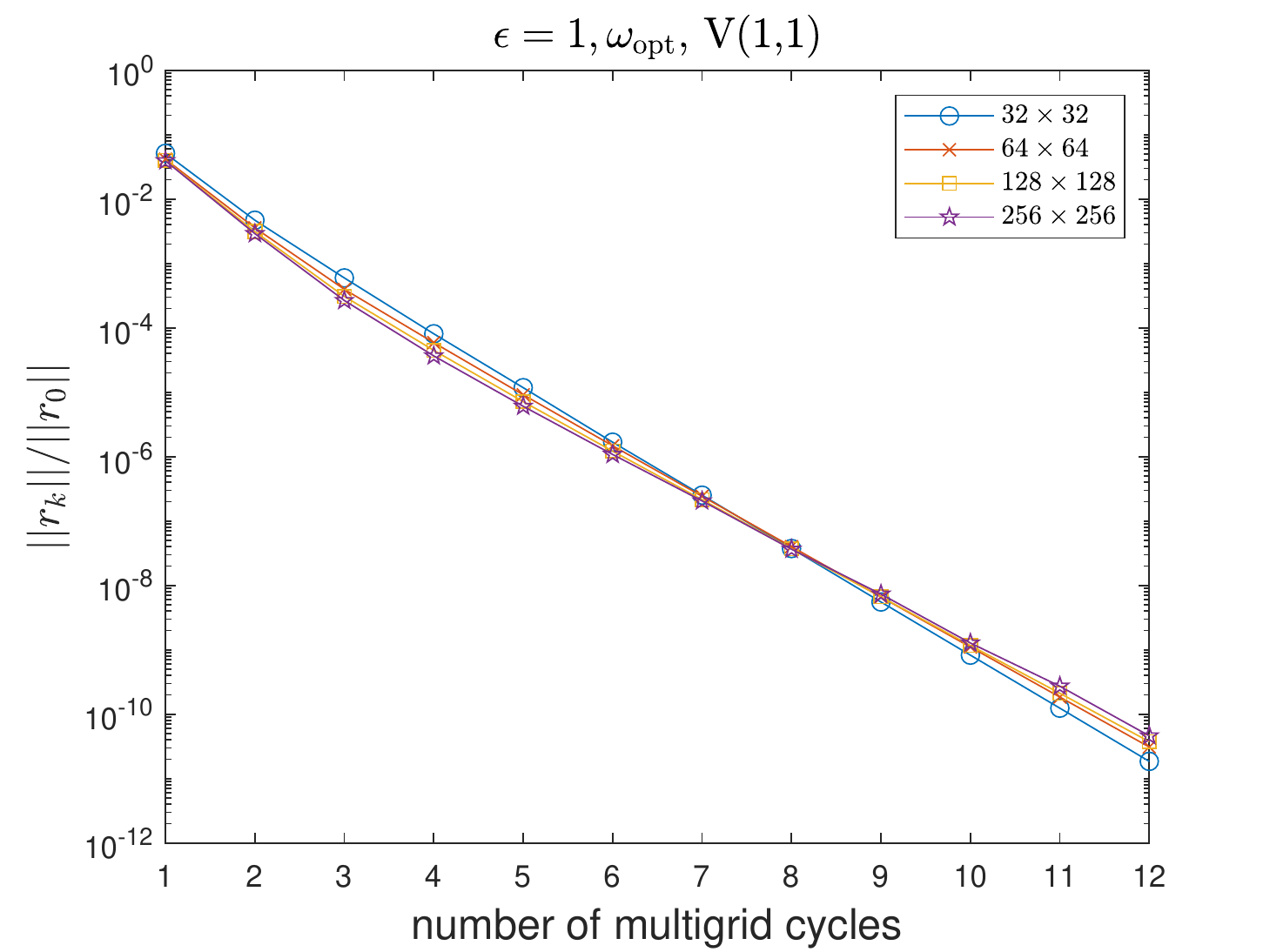}
 \caption{Convergence history: Number of iterations versus relative residual of V(1,1)-cycle with $\epsilon=1$ and two Jacobi iterations for Schur complement system (left $\omega=1$ and right  optimal $\omega$).} \label{fig:V-vs-J2-eps0}
\end{figure}

\begin{figure}[h!]
\centering
\includegraphics[width=0.49\textwidth]{./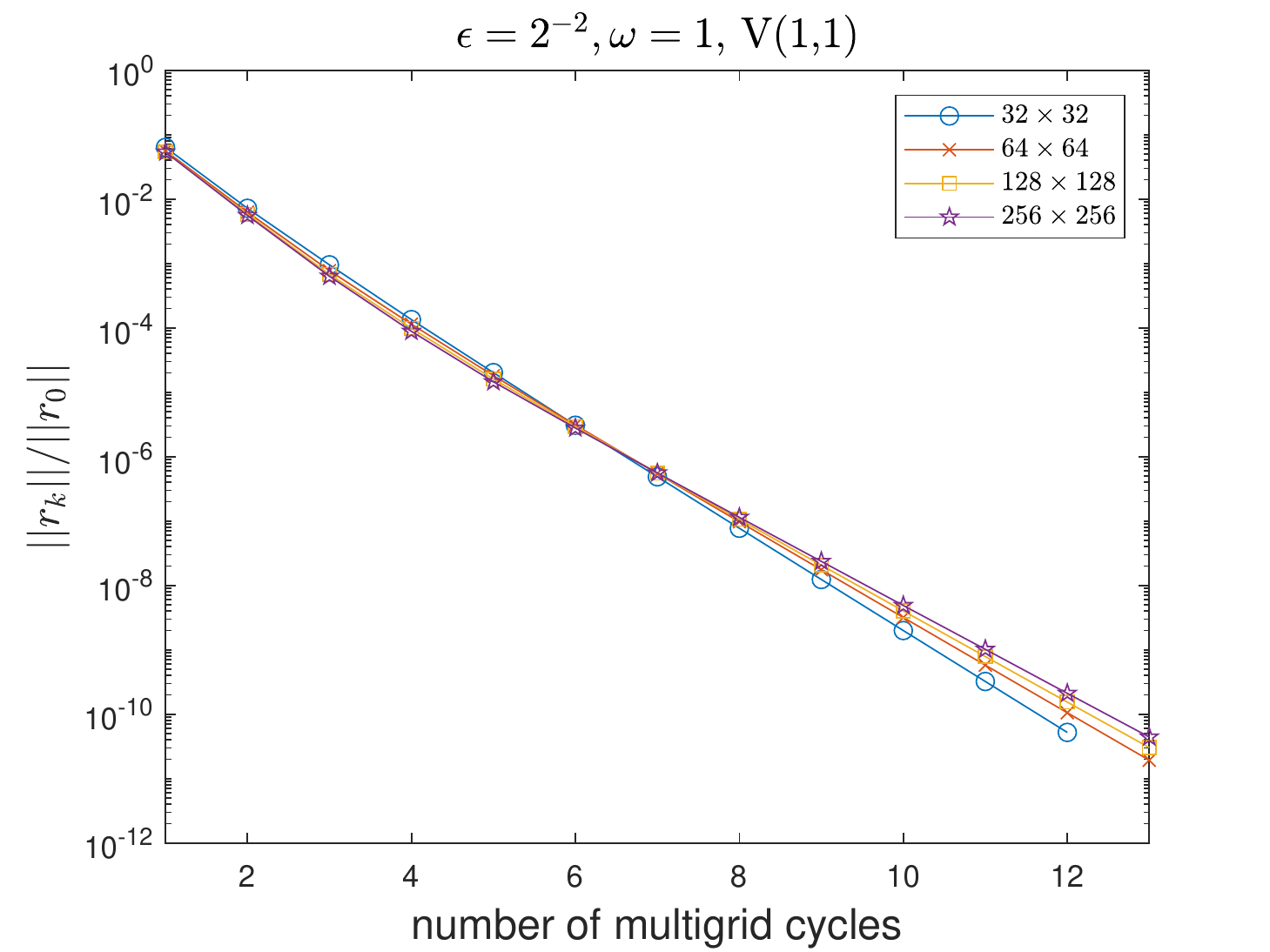}
\includegraphics[width=0.49\textwidth]{./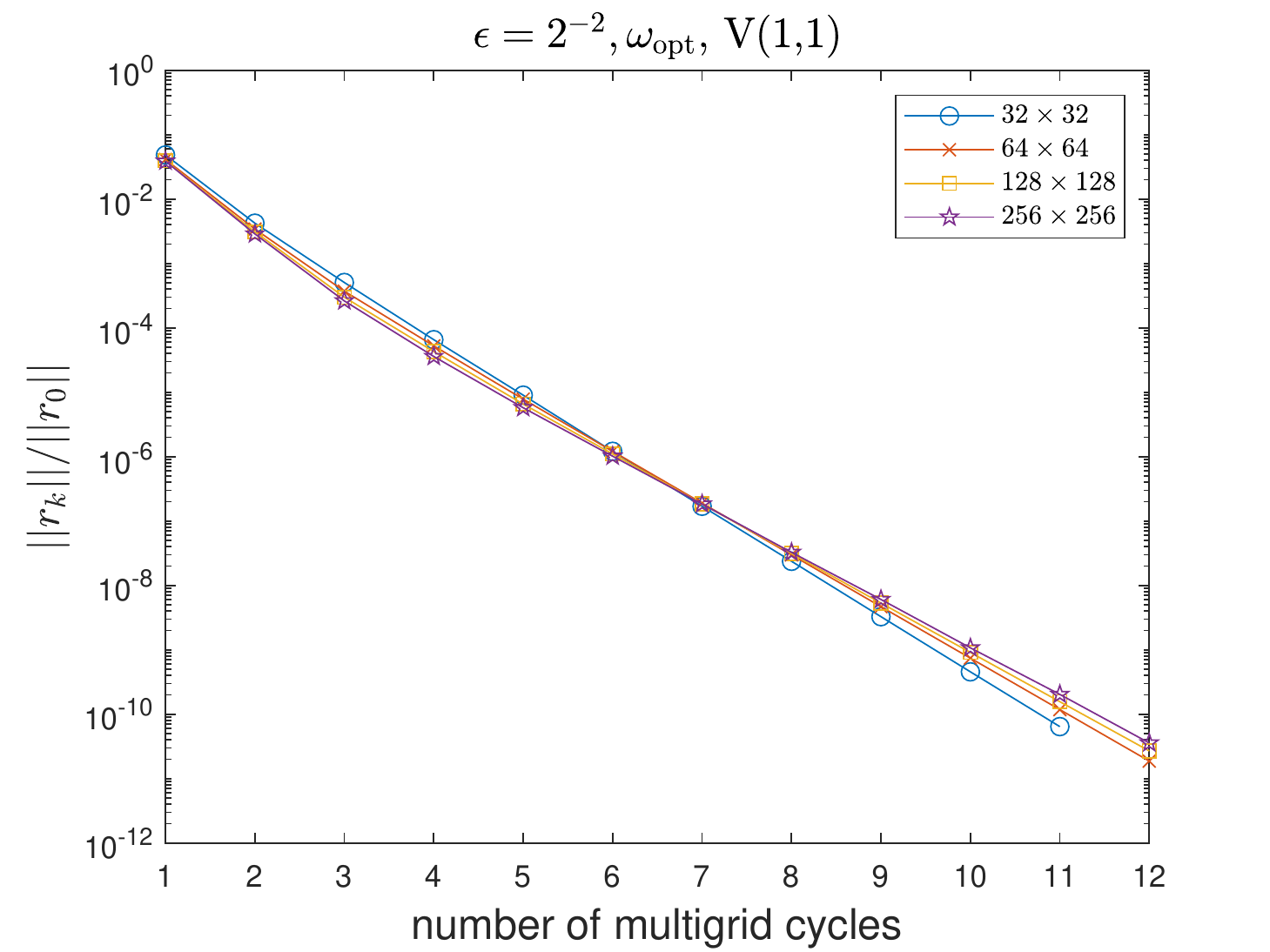}
 \caption{Convergence history: Number of iterations versus relative residual  of V(1,1)-cycle with $\epsilon=2^{-2}$ and two Jacobi iterations for Schur complement system (left $\omega=1$ and right  optimal $\omega$).} \label{fig:V-vs-J2-eps2}
\end{figure}

\begin{figure}[h!] 
\centering
\includegraphics[width=0.49\textwidth]{./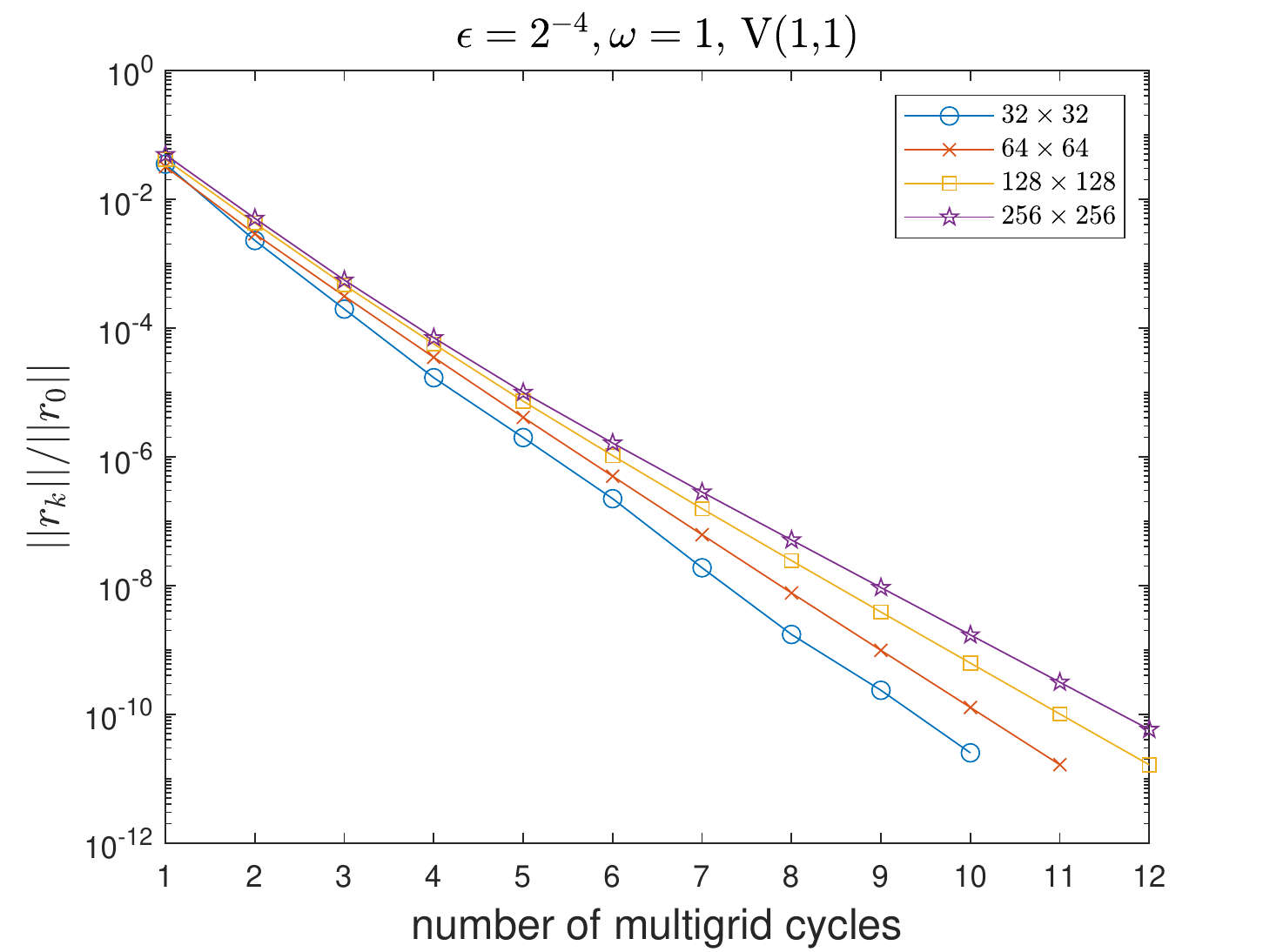}
\includegraphics[width=0.49\textwidth]{./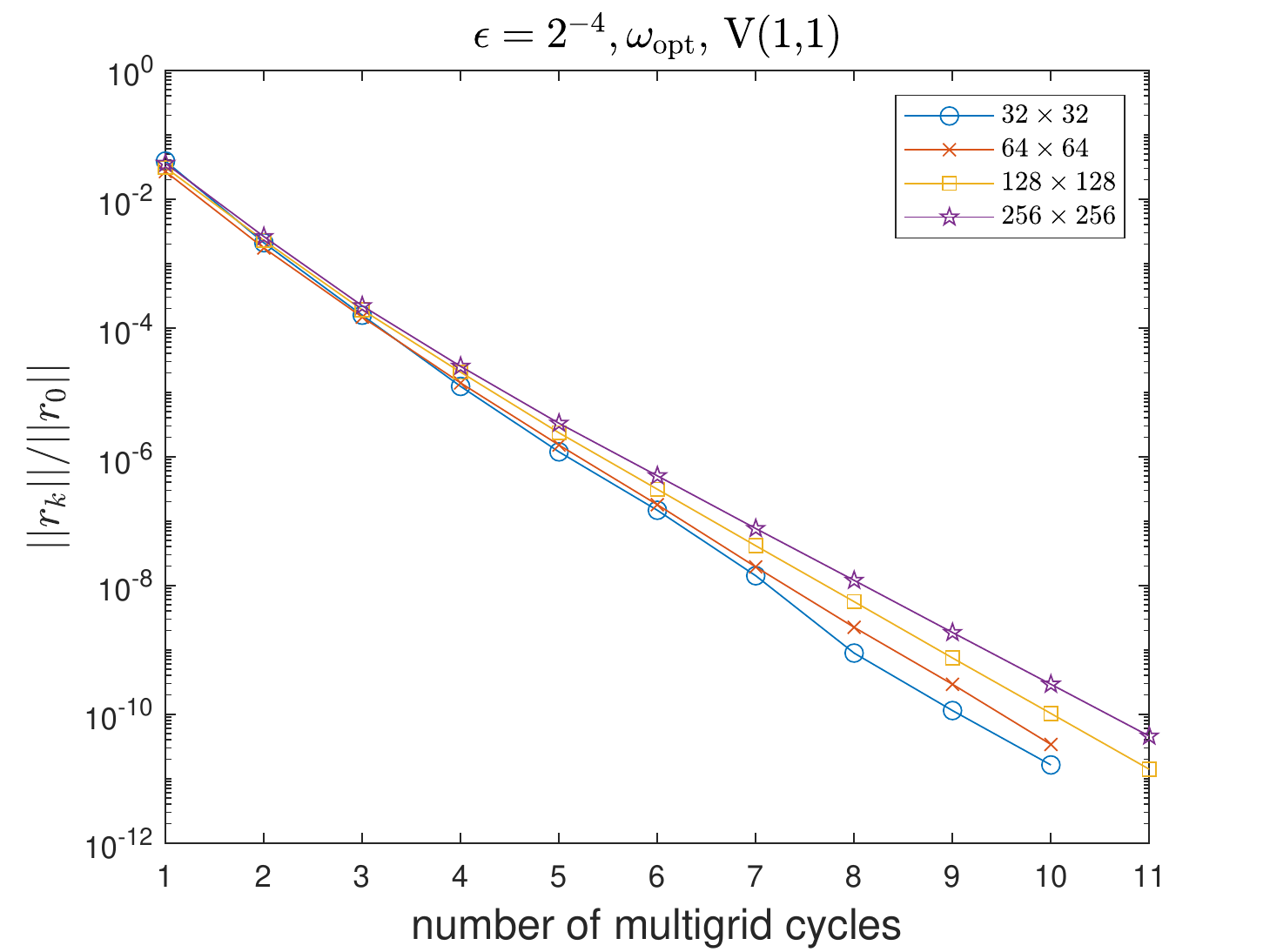}
 \caption{Convergence history: Number of iterations versus relative residual  of V(1,1)-cycle with $\epsilon=2^{-4}$ and two Jacobi iterations for Schur complement system (left $\omega=1$ and right  optimal $\omega$).} \label{fig:V-vs-J2-eps4}
\end{figure}

\begin{figure}[h!]
\centering
\includegraphics[width=0.49\textwidth]{./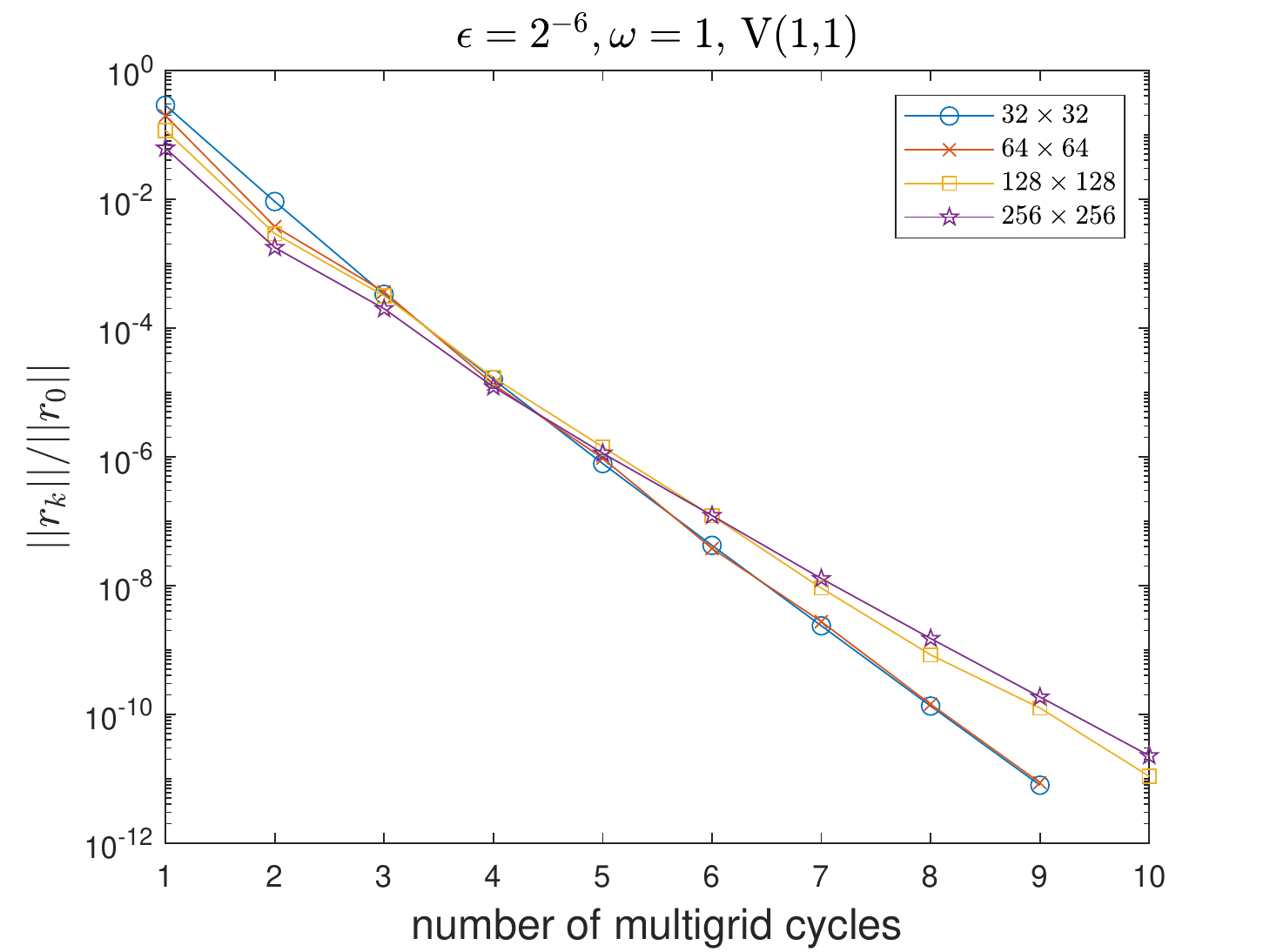}
\includegraphics[width=0.49\textwidth]{./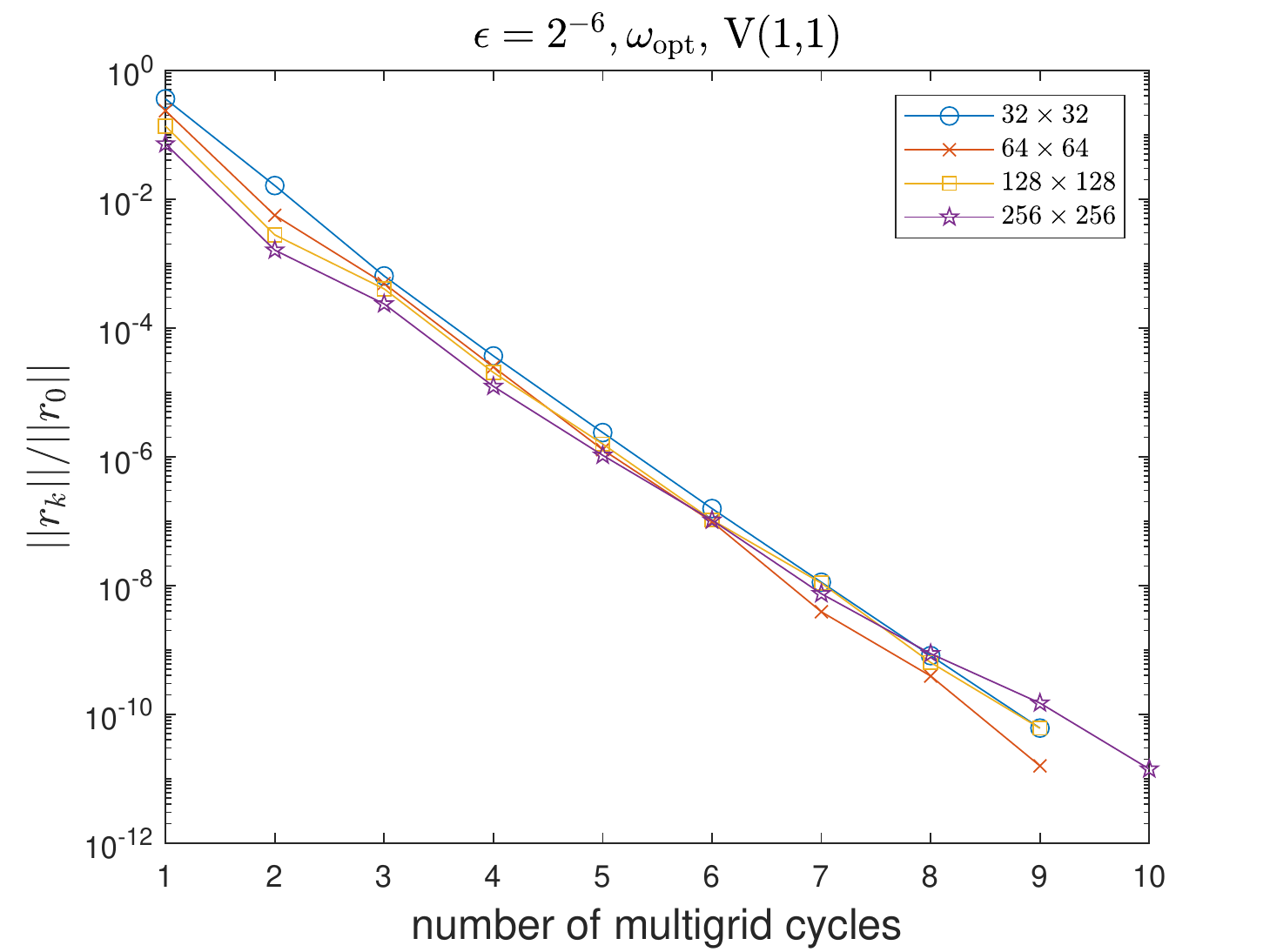}
 \caption{Convergence history: Number of iterations versus relative residual of V(1,1)-cycle with $\epsilon=2^{-6}$ and two Jacobi iterations for Schur complement system (left $\omega=1$ and right  optimal $\omega$).} \label{fig:V-vs-J2-eps6}
\end{figure}

\begin{figure}[h!]
\centering
\includegraphics[width=0.49\textwidth]{./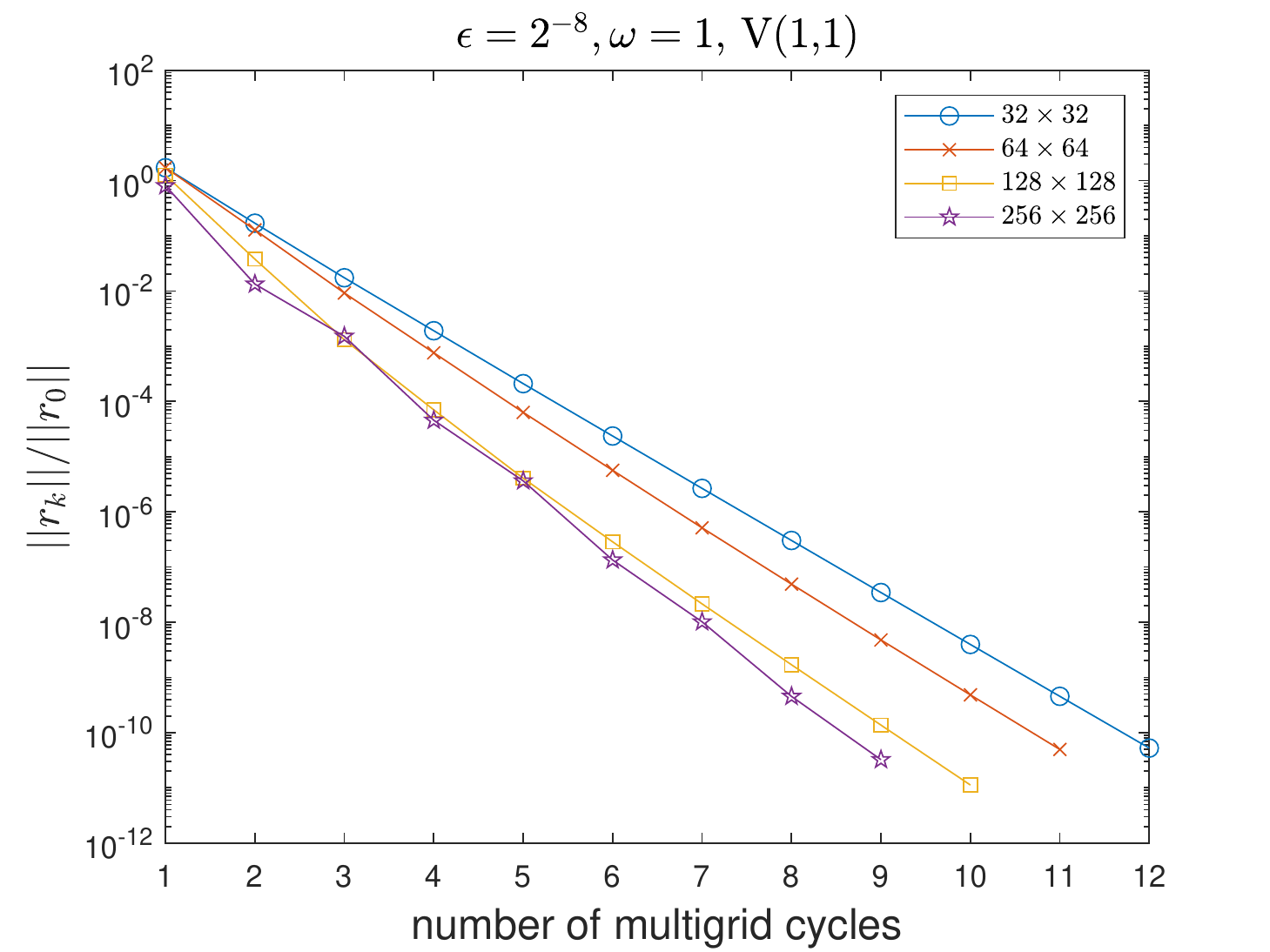}
\includegraphics[width=0.49\textwidth]{./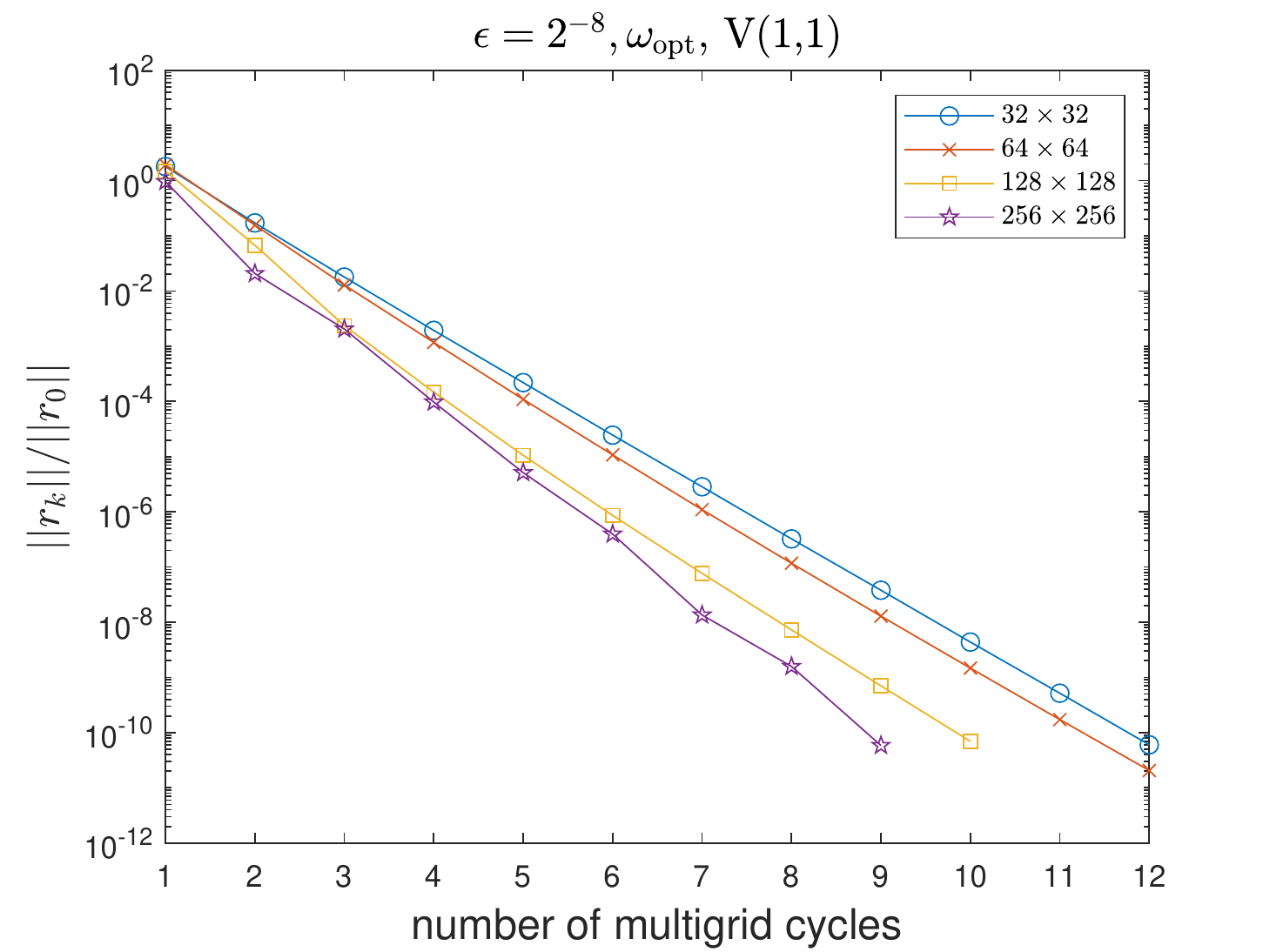}
 \caption{Convergence history: Number of iterations versus relative residual of V(1,1)-cycle with $\epsilon=2^{-8}$ and two Jacobi iterations for Schur complement system (left $\omega=1$ and right  optimal $\omega$).} \label{fig:V-vs-J2-eps8}
\end{figure}

{\bf Three iterations for Schur complement system:} We explore V(1,1)-cycle iterations with two choices of $\omega$ and a varying physical parameter $\epsilon$, using  three Jacobi iterations for Schur complement system.  We report the history of relative residual $\frac{||r_k||}{||r_0||}$  as a function of the  V(1,1)-cycle iteration counts for $n\times n$ meshgrid ($n=32,  64,128, 256$).  Figure \ref{fig:V-vs-eps0} shows the results for $\epsilon=1$. We see that using optimal $\omega$ takes 12 iterations of V(1,1)-cycle to achieve the stopping tolerance and it takes 13 iterations for $\omega=1$. We see that the convergence behavior is independent of meshsize $h$.  A similar performance is seen for  $\epsilon=2^{-2}, 2^{-4}, 2^{-6}, 2^{-8}$ in Figures \ref{fig:V-vs-eps2}, \ref{fig:V-vs-eps4},  \ref{fig:V-vs-eps6} and  \ref{fig:V-vs-eps8}.  Compared with two Jacobi iterations for solving the Schur complement system, three Jacobi iterations give a slightly better results for small  $\epsilon=2^{-6}, 2^{-8}$.  Again using optimal $\omega$  has   one iteration number  less than that  of $\omega=1$.  Thus, it is simple and reasonable to use $\omega=1$ in practice. Moreover, two Jacobi iterations are enough to achieve robustness V(1,1)-cycle multgrid with respect to meshgrid and physical parameter $\epsilon$.  

\begin{figure}[h!]
\centering
\includegraphics[width=0.49\textwidth]{./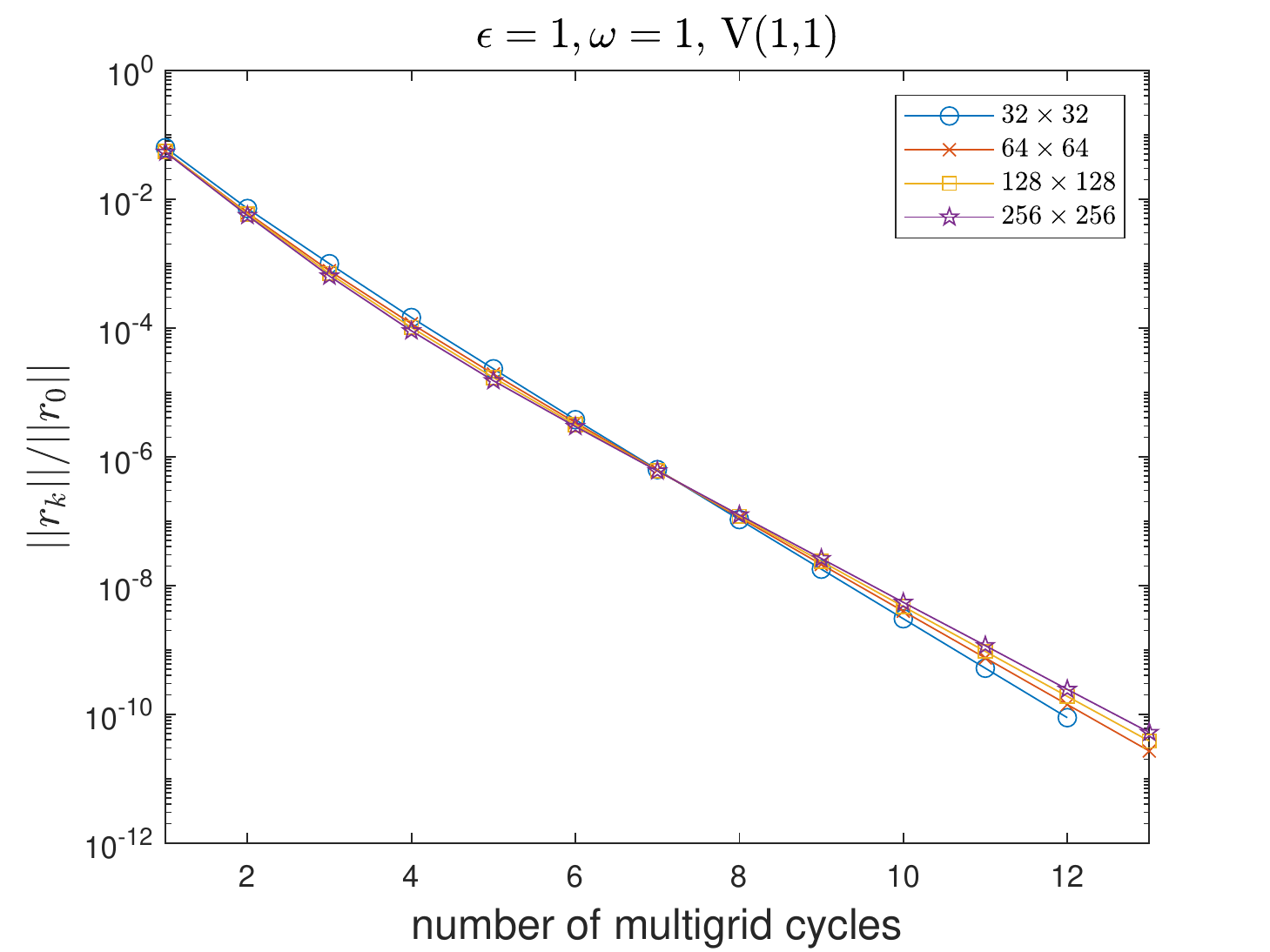}
\includegraphics[width=0.49\textwidth]{./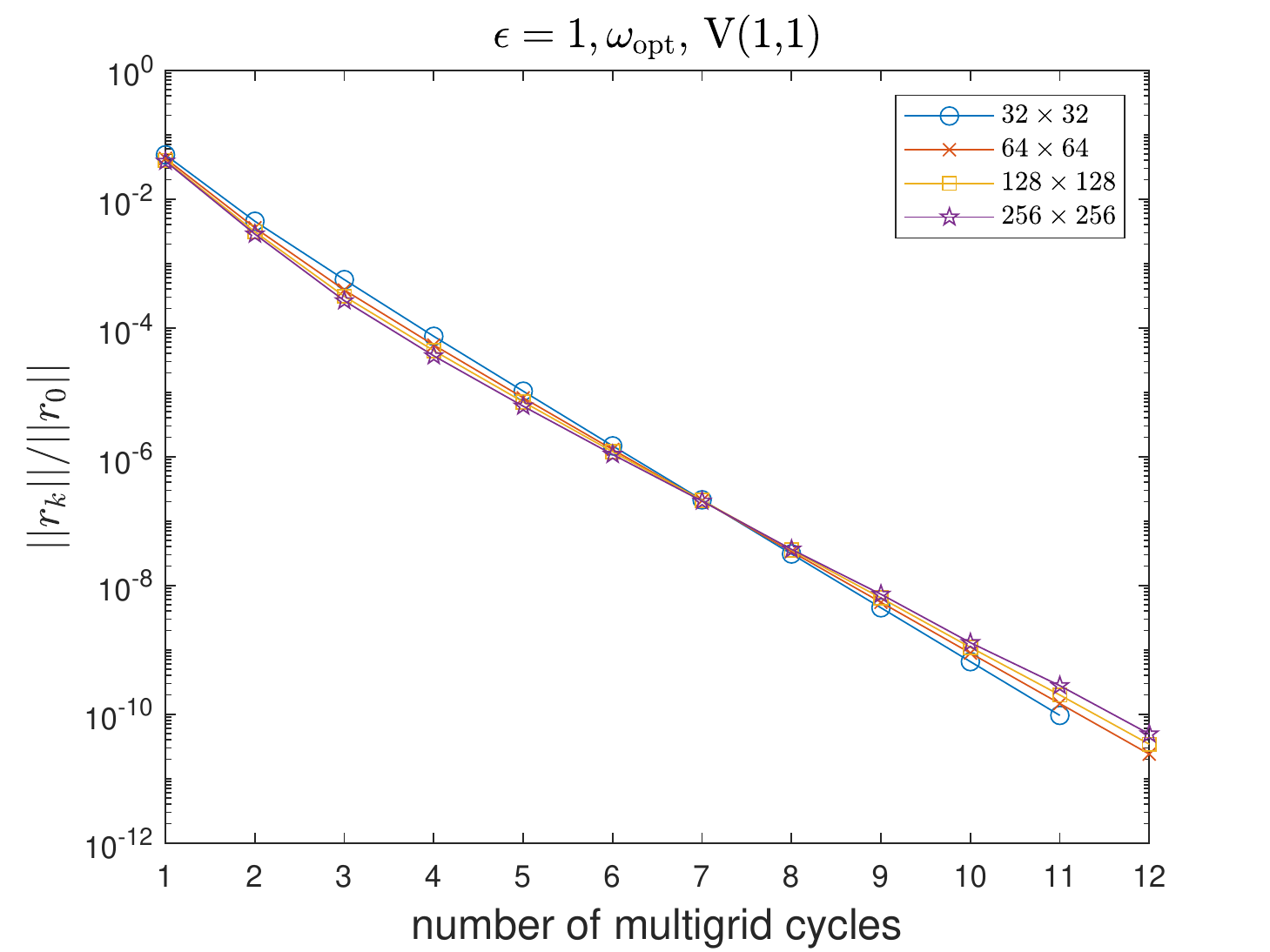}
 \caption{Convergence history: Number of iterations versus relative residual of V(1,1)-cycle with $\epsilon=1$ and three Jacobi iterations for Schur complement system (left $\omega=1$ and right  optimal $\omega$).} \label{fig:V-vs-eps0}
\end{figure}

\begin{figure}[h!]
\centering
\includegraphics[width=0.49\textwidth]{./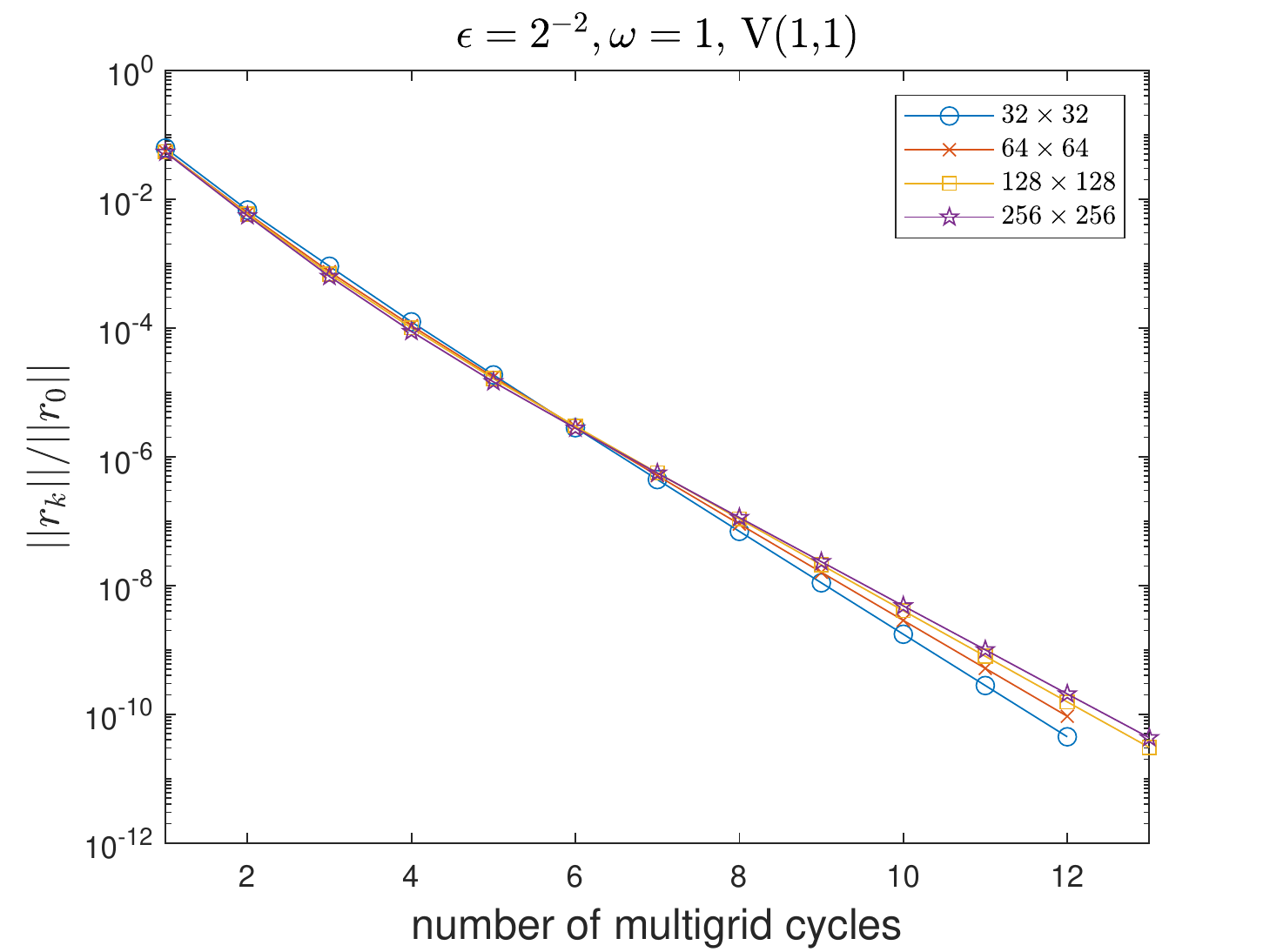}
\includegraphics[width=0.49\textwidth]{./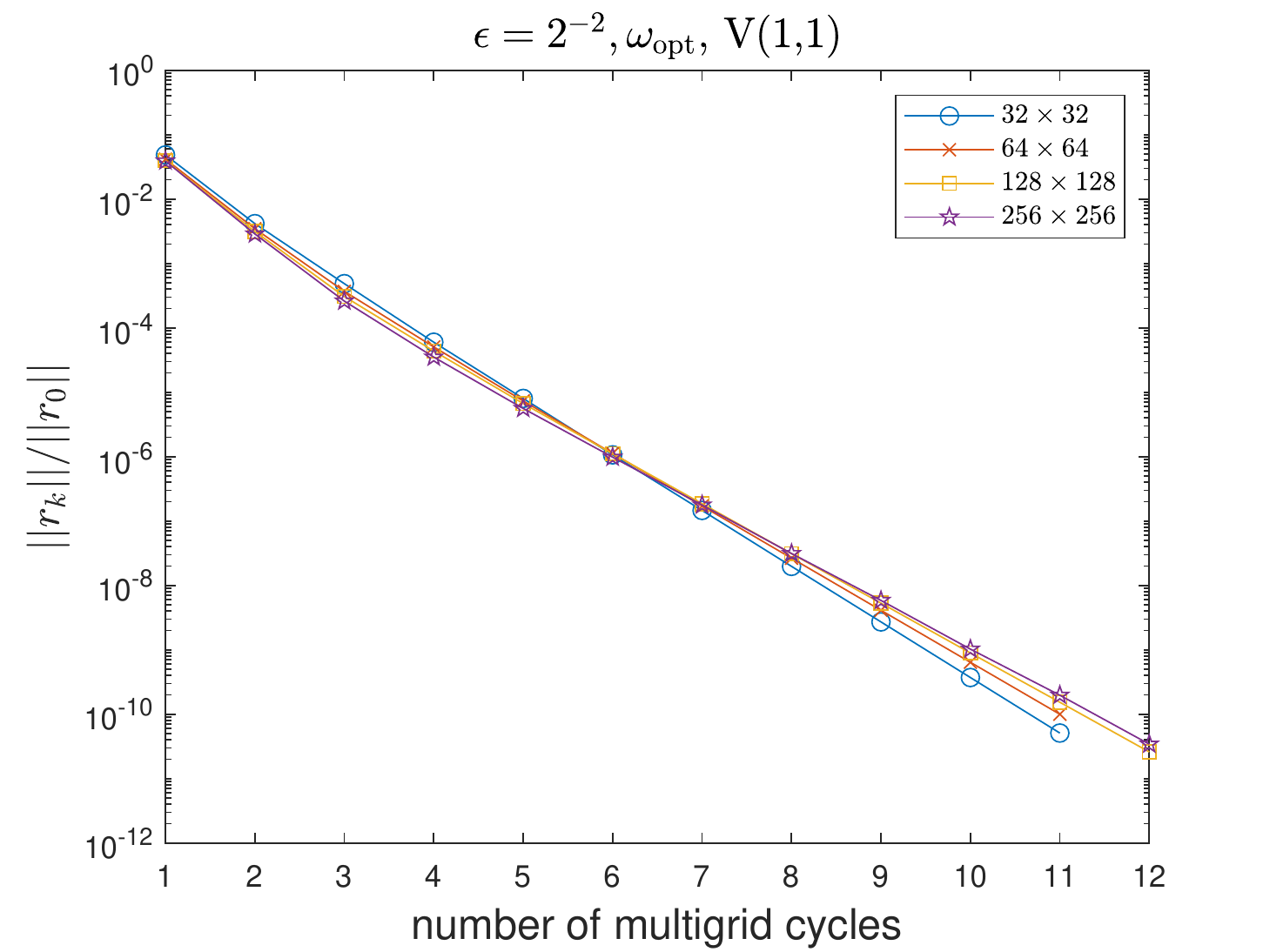}
 \caption{Convergence history: Number of iterations versus relative residual of V(1,1)-cycle with $\epsilon=2^{-2}$ and three Jacobi iterations for Schur complement system (left $\omega=1$ and right  optimal $\omega$).} \label{fig:V-vs-eps2}
\end{figure}

\begin{figure}[h!]
\centering
\includegraphics[width=0.49\textwidth]{./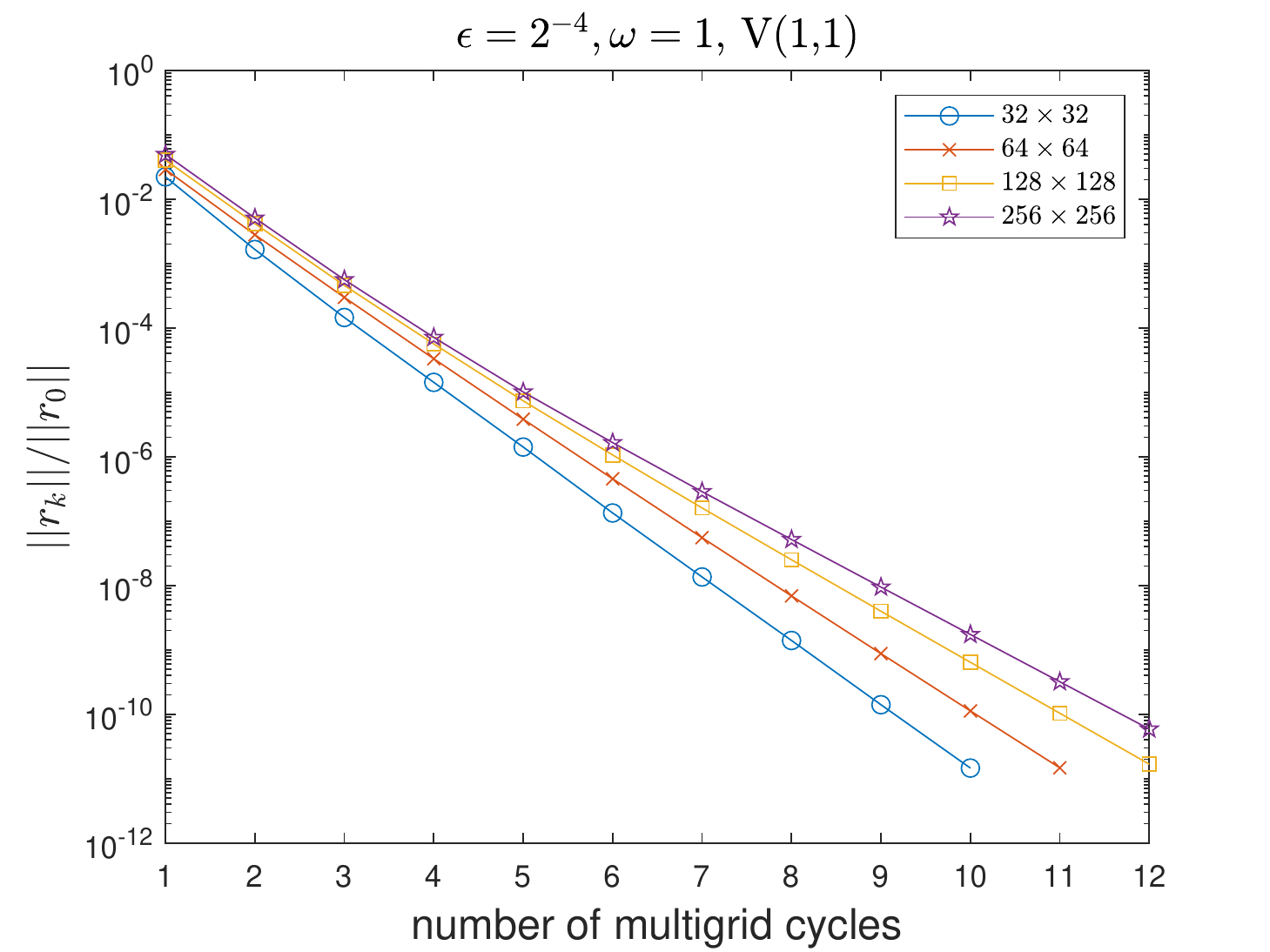}
\includegraphics[width=0.49\textwidth]{./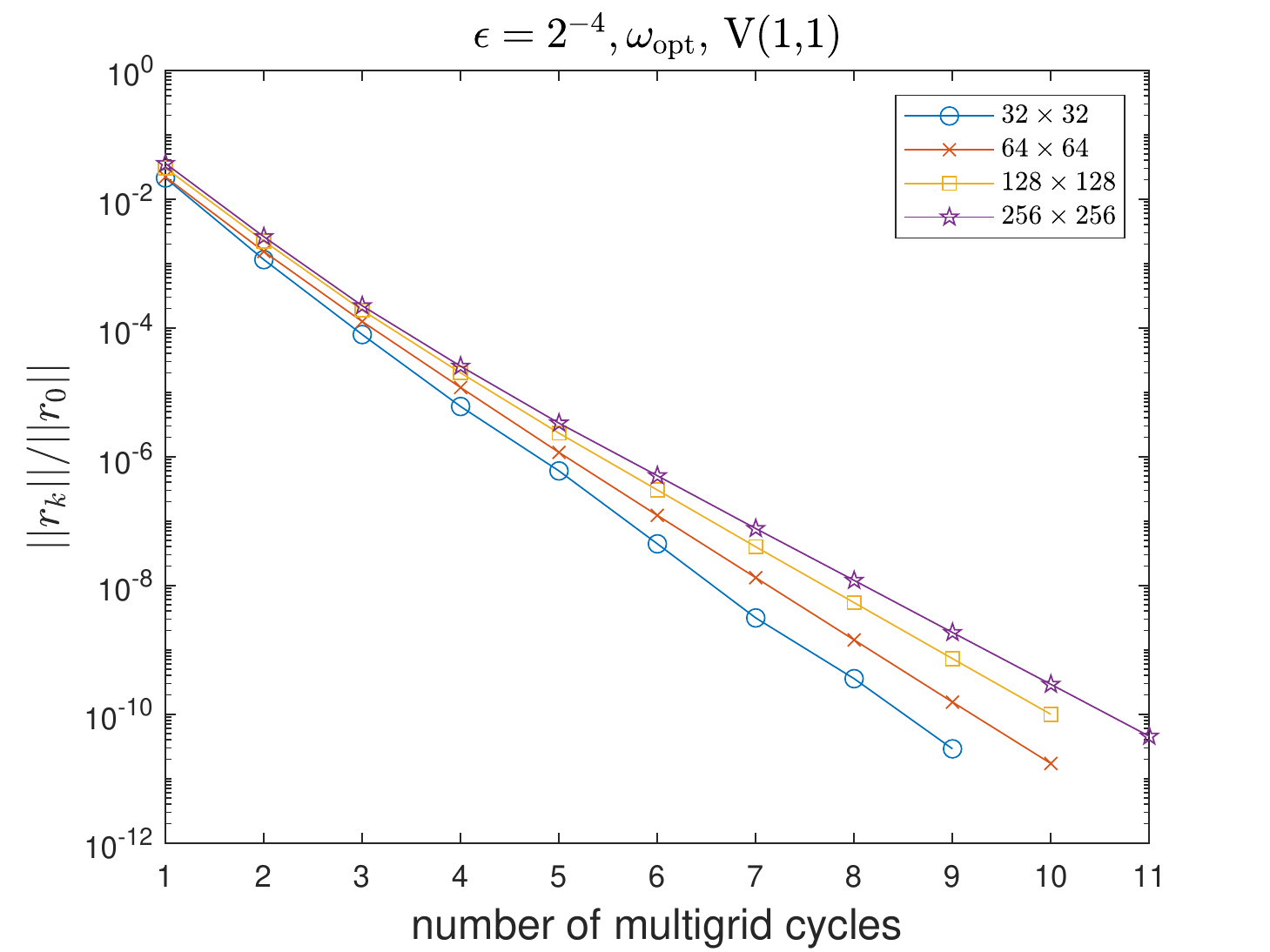}
 \caption{Convergence history: Number of iterations versus relative residual  of V(1,1)-cycle with $\epsilon=2^{-4}$ and three Jacobi iterations for Schur complement system (left $\omega=1$ and right  optimal $\omega$).} \label{fig:V-vs-eps4}
\end{figure}

\begin{figure}[h!]
\centering
\includegraphics[width=0.49\textwidth]{./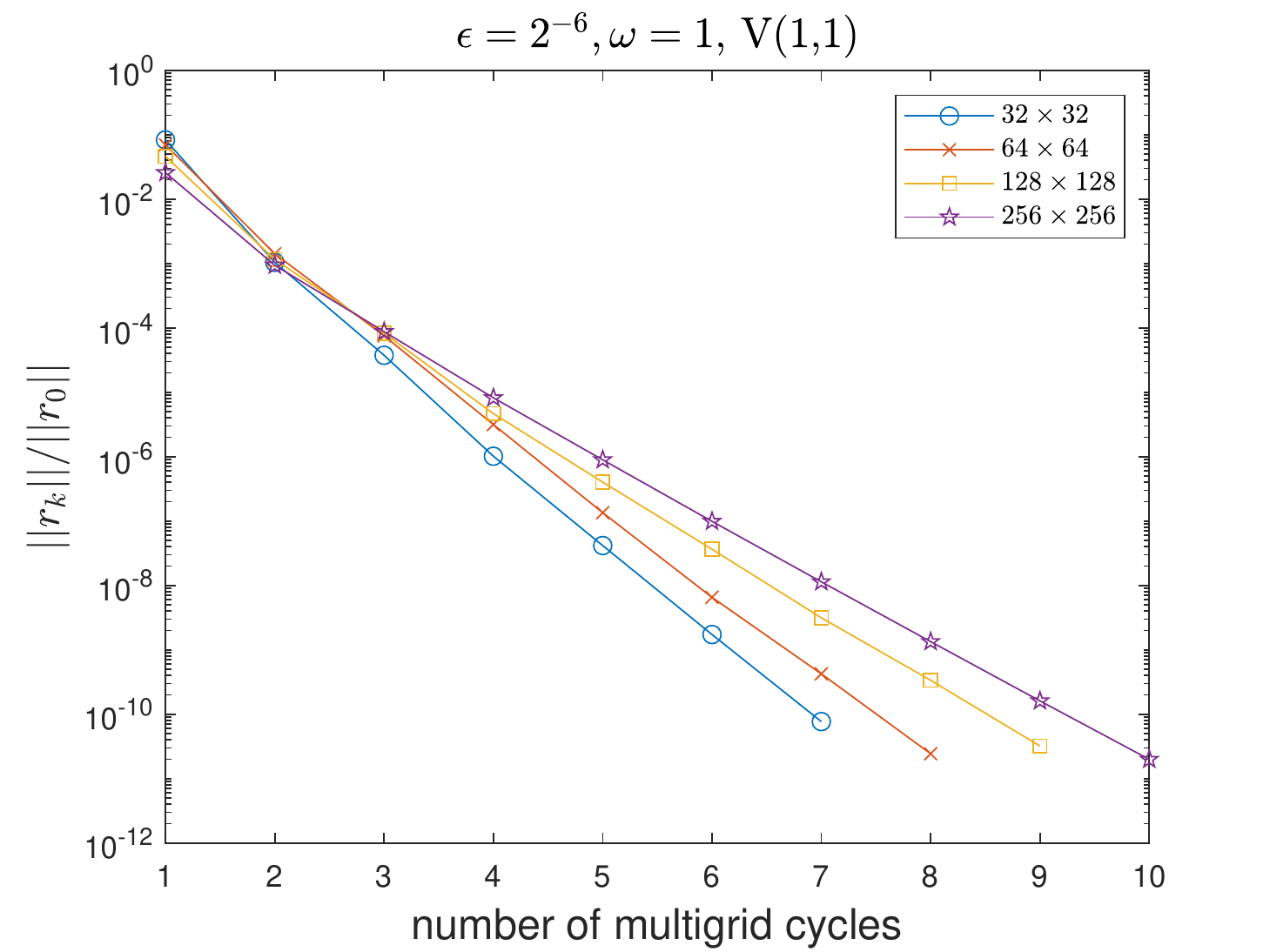}
\includegraphics[width=0.49\textwidth]{./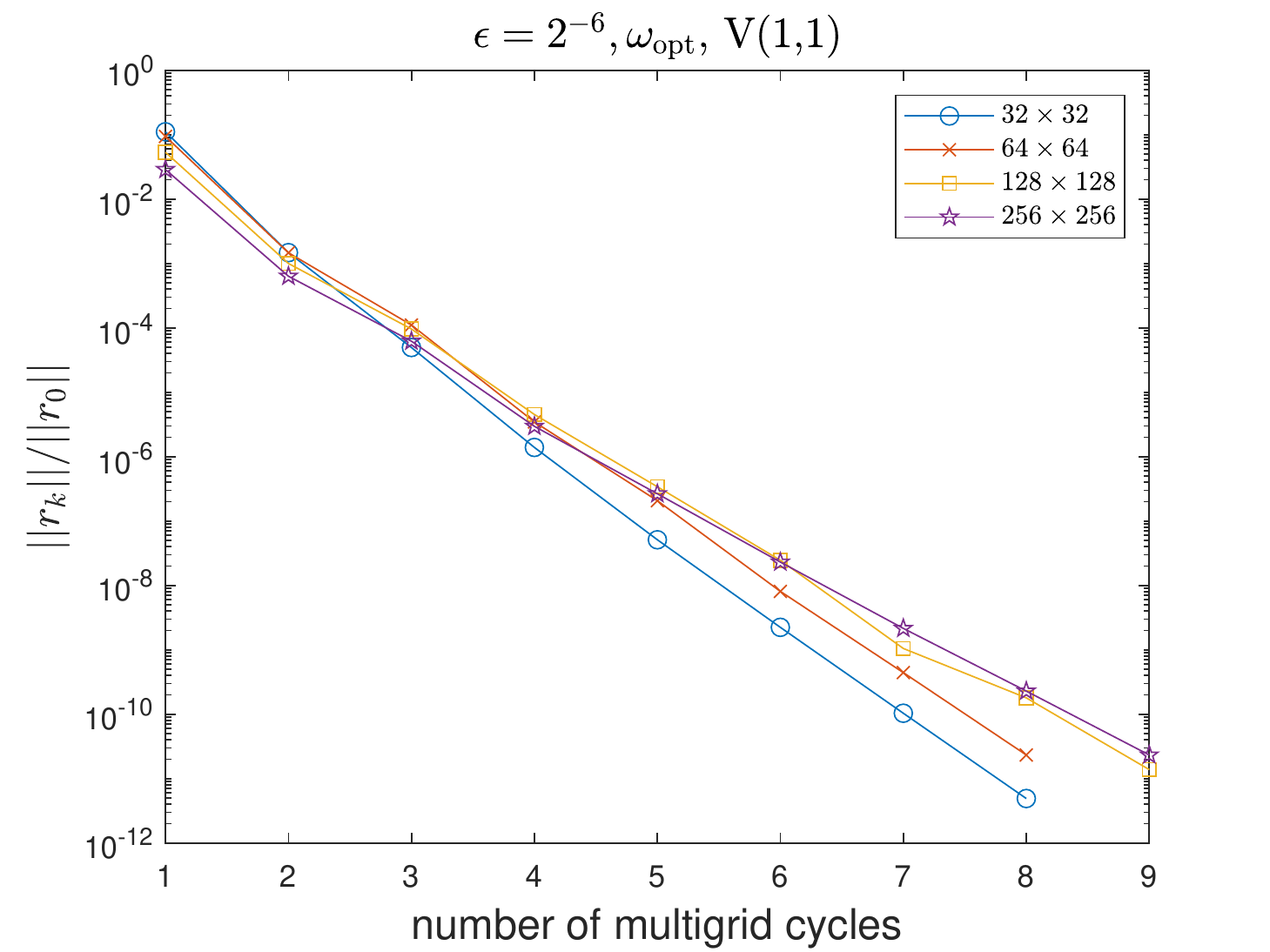}
 \caption{Convergence history: Number of iterations versus relative residual  of V(1,1)-cycle with $\epsilon=2^{-6}$ and three Jacobi iterations for Schur complement system (left $\omega=1$ and right  optimal $\omega$).} \label{fig:V-vs-eps6}
\end{figure}

\begin{figure}[h!]
\centering
\includegraphics[width=0.49\textwidth]{./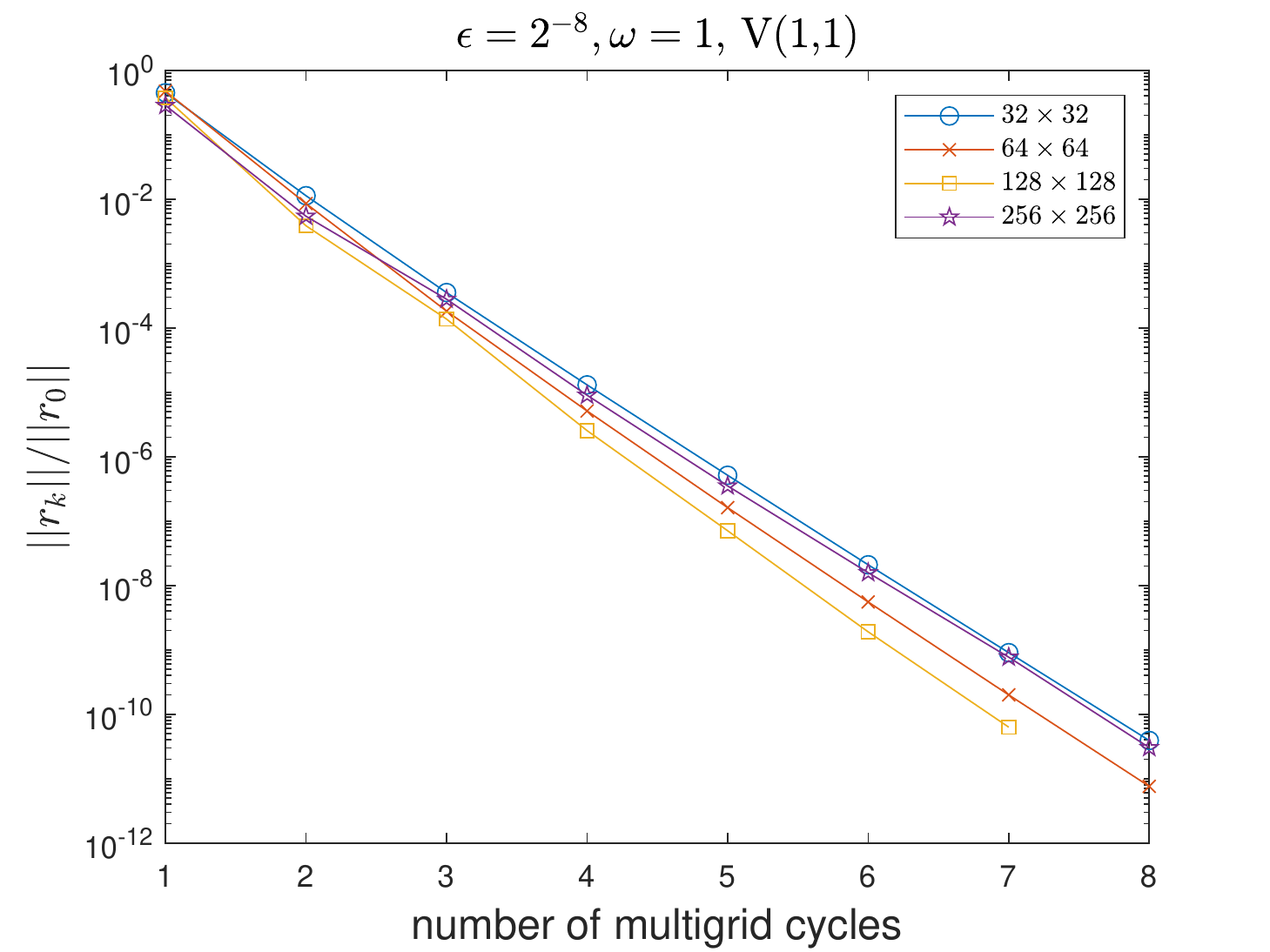}
\includegraphics[width=0.49\textwidth]{./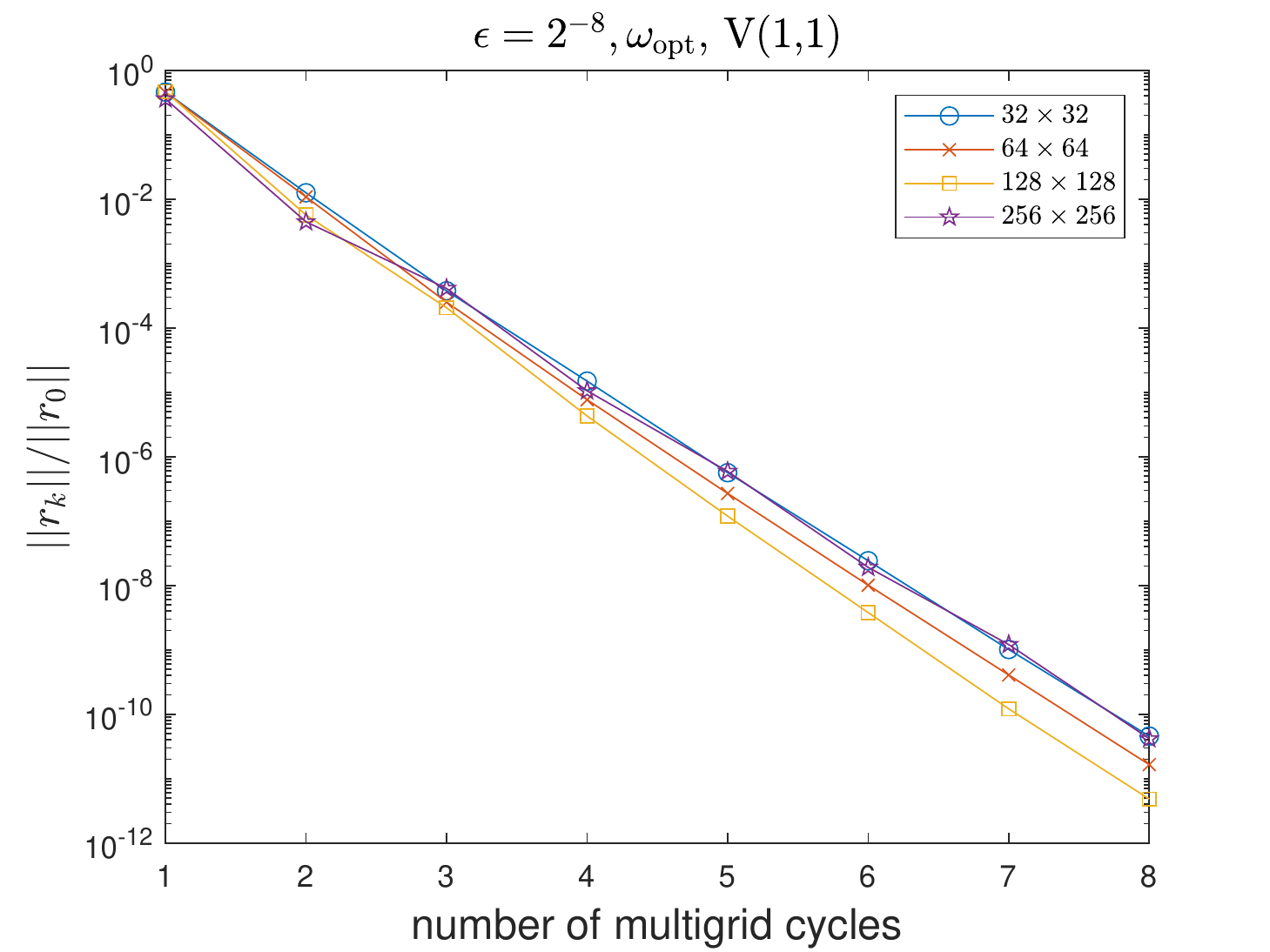}
 \caption{Convergence history: Number of iterations versus relative residual of V(1,1)-cycle with $\epsilon=2^{-8}$ and three Jacobi iterations for Schur complement system (left $\omega=1$ and right  optimal $\omega$).} \label{fig:V-vs-eps8}
\end{figure}

 \section{Conclusions}\label{sec:con}

 We propose a parameter-robust multigrid method for solving the discrete system of Stokes-Darcy problems, with the maker and cell scheme used for the discretization. The resulting linear system is a saddle-point system.  In contrast to existing Vanka smoothers, where the velocities and pressure unknowns in a grid cell are updated simultaneously,  we propose Vanka-based Braess-Sarazin relaxation scheme, where the Laplace-like term in the saddle-point system is solved by an additive Vanka algorithm.  This approach decouples the velocities and pressure unknowns. Moreover, only matrix-vector products are needed in our proposed multigrid method.   LFA is used to analyze the smoothing process and help choose the optimal parameter that minimizing LFA smoothing factor. From LFA, we derive the stencil of  additive Vanka for the Laplace-like operator, which can help form the global iteration matrix, avoiding solving many subproblems in the classical additive Vanka setting. 
 
 Our main contribution is that we derive the optimal algorithmic parameter and optimal LFA smoothing factor for Vanka-based Braess-Sarazin relaxation scheme, and show that this scheme is highly efficient with respect to physical parameter. Our theoretical results reveal that although the optimal damping parameter is related to physical parameter and meshsize, it is very close to one. We also present the theoretical LFA smoothing factor with damping parameter one. In Vanka-based Braess-Sarazin relaxation, we have to solve a Schur complement system. Direct solver is  often expensive.  We propose an inexact version of Vanka-based Braess-Sarazin relaxation, where we apply only two or three iterations of Jacobi to the Schur complement system to achieve the same performance as that of an exact solve.   We show that using damping parameter one can achieve almost the same performance as that of optimal result, and the results are close to exact version. Thus, using damping parameter one is recommended.  Our V-cycle multigrid   illustrates  high efficiency of our relaxation scheme and robustness to physical parameter.  
 
We comment that the proposed Vanka-based Braess-Sarazin multigrid method can be used
as a preconditioner for Krylov subspace methods. We have limited ourselves for  the MAC scheme on uniform grids. However,  it is possible to extend the Vanka-smoother to  non-uniform grids.  
 
%======================================================================================
\bibliographystyle{siam}
\bibliography{StokesDB_bib}

%------------------------------------------------------------------------------

%------------------------------------------------------------------------------
\end{document}